\title{Multistep matrix splitting iteration preconditioning\\for singular linear systems}
\author{Keiichi Morikuni\footnote{\texttt{morikuni@cs.tsukuba.ac.jp}} \thanks{Division of Information Engineering, Faculty of Engineering, Information and Systems, University of Tsukuba}}
\newcommand{\LINEIFELSE}[3]{%
    \STATE\algorithmicif\ {#1}\ \algorithmicthen\ {#2} \algorithmicelse\ {#3}%
}
\newcommand{\STATELINEFOR}[3]{%
    \STATE {#1}\algorithmicfor\ {#2}\ \algorithmicdo\ {#3} \algorithmicend\ \algorithmicfor%
}
\theoremstyle{plain} 
\newtheorem{theorem}{Theorem}[section]
\newtheorem{lemma}[theorem]{Lemma}
\newtheorem{proposition}[theorem]{Proposition}
\numberwithin{equation}{section}
\newfont{\bg}{cmr9 scaled\magstep4}
\newcommand{\bigzerol}{\smash{\lower1.0ex\hbox{\bg 0}}}
\DeclareMathOperator{\diag}{diag}
\DeclareMathOperator{\ind}{index}
\DeclareMathOperator{\rank}{rank}
\DeclareMathOperator{\spn}{span}
\newcommand\I{\operatorname{I}}
\begin{document}
\maketitle

\begin{abstract}
Multistep matrix splitting iterations serve as preconditioning for Krylov subspace methods for solving singular linear systems.
The preconditioner is applied to the generalized minimal residual (GMRES) method and the flexible GMRES (FGMRES) method.
We present theoretical and practical justifications for using this approach.
Numerical experiments show that the multistep generalized shifted splitting (GSS) and Hermitian and skew-Hermitian splitting (HSS) iteration preconditioning are more robust and efficient compared to standard preconditioners for some test problems of large sparse singular linear systems.

\smallskip
\noindent \textbf{Keywords:} Preconditioner, Inner-outer iteration, GMRES method, Stationary iterative method, Singular linear system.

\noindent \textbf{AMS subject classifications:} 65F08, 65F10, 65F20, 65F50.
\end{abstract}

\section{Introduction} \label{sec:intro}
Consider solving linear systems
\begin{align}
	A \boldsymbol{x} = \boldsymbol{b},
	\label{eq:Ax=b}
\end{align}
where $A \in \mathbb{R}^{n \times n}$ may be singular and $\boldsymbol{b} \in \mathbb{R}^n$.
For solving large sparse linear systems \eqref{eq:Ax=b}, iterative methods are preferred to direct methods in terms of efficiency and memory requirement.
When the problem \eqref{eq:Ax=b} is ill-conditioned, the convergence of iterative methods such as Krylov subspace methods tends to deteriorate and may be accelerated by using preconditioning.
However, well-established preconditioners using incomplete matrix factorizations \cite{Saad1988}, \cite{BaiDuffWathen2001}, \cite{BaiYin2009} require additional memory whose amount is typically comparable to that of the given problem, and may not work in the singular case.

Another approach for preconditioning Krylov subspace methods for solving linear systems is to use a splitting matrix such as the successive overrelaxation (SOR) method \cite{Frankel1950}, \cite{Young1950th}.
Matrix splitting iterations can serve as preconditioning for Krylov subspace methods.

In the singular case, some iterative methods and preconditioners may be infeasible, i.e., they may break down and/or fail to converge.
In this paper, we focus on using GMRES with preconditioning since the method is well-established and fairly well understood in the singular case \cite{Zhang2010}, \cite{HayamiSugihara2011}, \cite{EldenSimoncini2012}.
GMRES applied to the linear system \eqref{eq:Ax=b} with initial iterate $\boldsymbol{x}_0 \in \mathbb{R}^n$ gives the $k$th iterate $\boldsymbol{x}_k$ such that $\| \boldsymbol{b} - A \boldsymbol{x}_k \| = \min_{\boldsymbol{x} \in \boldsymbol{x}_0 + \mathcal{K}_k (A, \boldsymbol{r}_0)} \| \boldsymbol{b} - A \boldsymbol{x} \|$, where $\| \cdot \|$ is the Euclidean norm, $\boldsymbol{r}_0 = \boldsymbol{b} - A \boldsymbol{x}_0$ is the initial residual, and $\mathcal{K}_k (A, \boldsymbol{r}_0) = \spn \lbrace \boldsymbol{r}_0, A \boldsymbol{r}_0, \dots, A^{k-1} \boldsymbol{r}_0 \rbrace$ is the Krylov subspace of order $k$.
Hereafter, denote $\mathcal{K}_k = \mathcal{K}_k (A, \boldsymbol{r}_0)$ for simplicity.

In the singular case, GMRES may fail to determine a solution of \eqref{eq:Ax=b}.
GMRES is said to break down at some step $k$ if $\dim A \mathcal{K}_k < \dim \mathcal{K}_k$ or $\dim \mathcal{K}_k < k$ \cite[p.\ 38]{BrownWalker1997}.
Note that $\dim A \mathcal{K}_k \leq \dim \mathcal{K}_k \leq k$ holds for each $k$.
The dimensions of $A \mathcal{K}_k$ and $\mathcal{K}_k$ are related to the uniqueness of the iterate $\boldsymbol{x}_k$, whereas $\dim \mathcal{K}_k$ is related to the degeneracy of the Krylov subspace method. 
GMRES determines a solution of $A \boldsymbol{x} = \boldsymbol{b}$ without breakdown for all $\boldsymbol{b} \in \mathcal{R}(A)$ and for all $\boldsymbol{x}_0 \in \mathbb{R}^n$ if and only if $A$ is a group (GP) matrix $\mathcal{N}(A) \cap \mathcal{R}(A) = \lbrace \boldsymbol{0} \rbrace$ \cite[Theorem 2.6]{BrownWalker1997}, \cite[Theorem 2.2]{MorikuniHayami2015}, cf.\ \cite[Theorem 4.4.6]{Schneider2005th}, where $\mathcal{N}(A)$ is the null space of $A$ and $\mathcal{R}(A)$ is the range space of $A$.
The condition that $A$ is a GP matrix is equivalent to that the largest size of the Jordan block of $A$ corresponding to eigenvalue $0$ is not larger than one \cite[section 3]{Oldenburger1940}.

Other than Krylov subspace methods, much efforts have been made to study matrix splitting iterations for solving singular linear systems \eqref{eq:Ax=b} (see \cite{Keller1965}, \cite{MeyerPlemmons1977}, \cite{Dax1990}, \cite{BenziSzyld1997}, \cite{Song1999},	 \cite{Yuan2000}, \cite{SongWang2003}, \cite{Cao2004}, \cite{Wang2007a}).
Some of modern matrix splitting iterations were shown to be effectively used as preconditioning for Krylov subspace methods, and can be potentially useful as multistep matrix splitting iteration preconditioning.
For example, see \cite{Bai2010}, \cite{LiLiuPeng2012}, \cite{ChenLiu2013}, \cite{WuLi2014}, \cite{YangWuXu2014} for the Hermitian and skew-Hermitian splitting (HSS) iterations, \cite{WenHuangWang2011} for the triangular and skew symmetric splitting (TSS) iterations, \cite{CaoMiao2016} for the generalized shift splitting (GSS) iterations, and \cite{ZhengBaiYang2009}, \cite{ZhangLuWei2014} for Uzawa methods for singular saddle point problems.
We shed some light on the preconditioning aspect of matrix splitting iterations in the singular case.

Consider applying GMRES to the preconditioned linear system $A P^{-1} \boldsymbol{u} = \boldsymbol{b}$, $\boldsymbol{x} = P^{-1} \boldsymbol{u}$, which is equivalent to $A \boldsymbol{x} = \boldsymbol{b}$, where $P$ is nonsingular and a preconditioning matrix given by multistep matrix splitting iterations.
The right-preconditioned GMRES (RP-GMRES) method with initial iterate $\boldsymbol{x}_0 \in \mathbb{R}^n$ determines the $k$th iterate $\boldsymbol{x}_k$ such that $\| \boldsymbol{b} - A \boldsymbol{x}_k \| = \break \min_{\boldsymbol{x} \in \boldsymbol{x}_0 + \mathcal{K}_k (P^{-1} A, P^{-1} \boldsymbol{r}_0)} \| \boldsymbol{b} - A \boldsymbol{x} \|$, where $\boldsymbol{u}_0 \in \mathbb{R}^n$ and $\boldsymbol{r}_0 = \boldsymbol{b} - A P^{-1} \boldsymbol{u}_0 = \boldsymbol{b} - A \boldsymbol{x}_0$.
On the other hand, the flexible GMRES (FGMRES) method \cite{Saad1993} allows to change the preconditioning matrix for each iteration.
This means that the number of the multistep matrix splitting iterations may vary in GMRES.

The rest of the paper is organized as follows.
In section \ref{sec:iip}, we give sufficient conditions such that GMRES preconditioned by a fixed number of matrix splitting iterations determines a solution without breakdown, a spectral analysis of the preconditioned matrix, and a convergence bound of the method, and discuss the computational complexity of the method.
In section \ref{sec:FGMRES}, we give sufficient conditions such that FGMRES preconditioned by multistep matrix splitting iterations determines a solution without breakdowns in the singular case.
In section \ref{sec:nuex}, we show numerical experiment results on test problems comparing the multistep generalized shift-splitting (GSS) and Hermitian and skew-Hermitian (HSS) matrix splitting iteration preconditioners with the GSS and HSS preconditioners, respectively.
In section \ref{sec:conc}, we conclude the paper.

\section{GMRES preconditioned by a fixed number of matrix splitting iterations} \label{sec:iip}
Consider applying a preconditioner using several steps of matrix splitting iterations to RP-GMRES.
We give its algorithm as follows (cf.\ \cite{DeLongOrtega1995}, \cite{DeLongOrtega1998}). 

\begin{algorithm}
\caption{GMRES method preconditioned by $\ell$ matrix splitting iterations.}
\label{alg:RiGMRES}
\begin{algorithmic}[1]
\STATE Let $\boldsymbol{x}_0 \in \mathbb{R}^n$ be the initial iterate. $\boldsymbol{r}_0 := \boldsymbol{b} - A \boldsymbol{x}_0$, $\beta := \| \boldsymbol{r}_0 \|$, $\boldsymbol{v}_1 := \boldsymbol{r}_0 / \beta$;
\FOR{$k = 1, 2, \dotsc$ until convergence}
\STATE Apply $\ell$ iterations of a matrix splitting to $A \boldsymbol{z} = \boldsymbol{v}_k$ to obtain $\boldsymbol{z}_k = C^{(\ell)} \boldsymbol{v}_k$;
\STATELINEFOR{$\boldsymbol{w} := A \boldsymbol{z}_k$, }{$i = 1, 2, \dots, k$}{$h_{i, k} := (\boldsymbol{v}_i, \boldsymbol{w})$, $\boldsymbol{w} := \boldsymbol{w} - h_{i, k} \boldsymbol{v}_i$}
\LINEIFELSE{$h_{k+1, k} := \| \boldsymbol{w} \| = 0$}{set $m := k$ and go to line 7}{$\boldsymbol{v}_{k+1} := \boldsymbol{w} / h_{k+1, k}$;}
\ENDFOR
\STATE $\boldsymbol{y}_m := \mathrm{arg\,min}_{\boldsymbol{y} \in \mathbb{R}^m} \| \beta \boldsymbol{e}_1 - H_{m+1, m} \boldsymbol{y} \|$, $\boldsymbol{x}_m := \boldsymbol{x}_0 + [\boldsymbol{z}_1, \boldsymbol{z}_2, \dots, \boldsymbol{z}_m] \boldsymbol{y}_m$;
\end{algorithmic}
\end{algorithm}

Here, $C^{(\ell)}$ is the preconditioning matrix given by a fixed number $\ell$ of matrix splitting iterations, $\boldsymbol{e}_i$ is the $i$th column of the identity matrix, and $H_{m+1, m} = \lbrace h_{i, j} \rbrace \in \mathbb{R}^{(m+1) \times m}$.

We next give an expression for the preconditioned matrix $A C^{(\ell)}$ for GMRES with $\ell$ matrix splitting iterations. 
Consider the matrix splitting iterations applied to $A \boldsymbol{z} = \boldsymbol{v}_k$ in line 3.
Note $\boldsymbol{v}_k \in \mathcal{R}(A)$ if $\boldsymbol{b} \in \mathcal{R}(A)$.
Let $M$ be a nonsingular matrix such that $A = M - N$.
Denote the iteration matrix by $H = M^{-1} N$.
Assume that the initial iterate is $\boldsymbol{z}^{(0)} \in \mathcal{N}(H)$, e.g., $\boldsymbol{z}^{(0)} = \boldsymbol{0}$.
Then, the $\ell$th iterate of the matrix splitting iterations is $\boldsymbol{z}^{(\ell)} = H \boldsymbol{z}^{(\ell - 1)} + M^{-1} \boldsymbol{v}_k = \sum_{i = 0}^{\ell - 1} H^i \! M^{-1} \boldsymbol{v}_k$, $\ell \in \mathbb{N}$.
Hence, the multistep matrix splitting iteration preconditioning and preconditioned matrices are $C^{(\ell)} = \sum_{i = 0}^{\ell - 1} H^i \! M^{-1}$ and $A C^{(\ell)} = M^{-1} \sum_{i = 0}^{\ell - 1} H^i (\mathrm{I} - H) M = M^{-1} \! (\mathrm{I} - H^\ell) M = M^{-1} \sum_{i=0}^{\ell-1} (\mathrm{I} - M^{-1} \! A)^i M^{-1} \! A M$, respectively.

We will give sufficient conditions such that GMRES preconditioned by matrix splitting iterations determines a solution of \eqref{eq:Ax=b} without breakdown.
First, we prepare the following

\begin{proposition}[\textrm{\cite{Hensel1926}, \cite[Theorem 1]{Oldenburger1940}, \cite[Theorem 2]{Tanabe1974}}] \label{prop:semiconv}
Let $H$ be a square real matrix.
Then, $H$ is semiconvergent, i.e., $\lim_{i \rightarrow \infty} H^i$ exists, if and only if either $\lambda = 1$ is semisimple, i.e., the algebraic and geometric multiplicities corresponding to $\lambda = 1$ are equal, or $| \lambda | < 1$ holds for all $\lambda \in \sigma(H) = \lbrace \lambda \mid H \boldsymbol{v} = \lambda \boldsymbol{v}, \boldsymbol{v} \in \mathbb{C}^n \backslash \lbrace \boldsymbol{0} \rbrace \rbrace$ the spectrum of $H$.
\end{proposition}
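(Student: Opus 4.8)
The plan is to diagonalize the problem via the Jordan canonical form, since semiconvergence is invariant under similarity. Writing $H = S J S^{-1}$ with $J$ in Jordan form, we have $H^i = S J^i S^{-1}$, and since $S$ is invertible, $\lim_{i \rightarrow \infty} H^i$ exists if and only if $\lim_{i \rightarrow \infty} J^i$ exists. As $J$ is block diagonal with Jordan blocks and the $i$th power acts blockwise, it suffices to characterize when $\lim_{i \rightarrow \infty} J_m(\lambda)^i$ exists for a single $m \times m$ Jordan block $J_m(\lambda)$ associated with an eigenvalue $\lambda \in \sigma(H)$.

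The key computation is the explicit power of a Jordan block. Writing $J_m(\lambda) = \lambda \mathrm{I} + N$, where $N$ is the nilpotent shift satisfying $N^m = O$, the binomial theorem gives
\begin{align}
	J_m(\lambda)^i = \sum_{j = 0}^{m-1} \binom{i}{j} \lambda^{i-j} N^j,
	\label{eq:jordanpower}
\end{align}
so that the $(p, p+j)$ entry equals $\binom{i}{j} \lambda^{i-j}$. Hence $\lim_{i \rightarrow \infty} J_m(\lambda)^i$ exists if and only if each scalar sequence $\binom{i}{j} \lambda^{i-j}$ converges as $i \rightarrow \infty$ for $j = 0, 1, \dots, m-1$.

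The heart of the argument is then an elementary case analysis on $\lambda$ and $m$. If $| \lambda | > 1$, already the diagonal term $\lambda^i$ diverges. If $| \lambda | = 1$ but $\lambda \neq 1$, then $\lambda^i$ oscillates on the unit circle and fails to converge. If $\lambda = 1$ and $m \geq 2$, the superdiagonal coefficient $\binom{i}{1} = i$ grows without bound, so the limit fails to exist; whereas if $\lambda = 1$ and $m = 1$, then $J_1(1)^i = 1$ for all $i$ and the limit trivially exists. Finally, if $| \lambda | < 1$, the exponential decay of $\lambda^{i-j}$ dominates the polynomial growth $\binom{i}{j} = O(i^j)$, so every entry tends to zero and $J_m(\lambda)^i \rightarrow O$.

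Assembling these cases over all Jordan blocks, $\lim_{i \rightarrow \infty} H^i$ exists precisely when every eigenvalue satisfies $| \lambda | \leq 1$, the only eigenvalue of modulus one is $\lambda = 1$, and every Jordan block for $\lambda = 1$ has size one. The last condition is exactly the statement that $\lambda = 1$ is semisimple, i.e., its algebraic and geometric multiplicities coincide, which yields the claimed characterization. The main obstacle is the $\lambda = 1$ case: one must verify that the growth of the binomial coefficients on the superdiagonals in \eqref{eq:jordanpower}, and not merely the behaviour of the diagonal, is what forces the Jordan blocks at $1$ to be trivial, thereby linking the convergence of $H^i$ to the semisimplicity of the eigenvalue $1$ rather than to a weaker spectral-radius condition.
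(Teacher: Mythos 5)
Your proof is correct, but note that the paper itself offers no proof of this proposition: it is quoted verbatim from the literature (Hensel, Oldenburger, Tanabe), so there is no in-paper argument to compare against. What you have supplied is the standard self-contained derivation via the Jordan canonical form, and every step checks out: similarity reduces the question to a single Jordan block, the binomial expansion $J_m(\lambda)^i=\sum_{j=0}^{m-1}\binom{i}{j}\lambda^{i-j}N^j$ isolates the scalar sequences whose convergence is decisive, and the case analysis correctly shows that convergence forces $|\lambda|\leq 1$, excludes unimodular eigenvalues other than $1$, and forces the blocks at $\lambda=1$ to be $1\times 1$. One point worth making explicit: the characterization you actually prove --- $\rho(H)\leq 1$, the only eigenvalue of modulus one is $\lambda=1$, and $\lambda=1$ is semisimple --- is sharper than the literal wording of the proposition, whose ``either $\lambda=1$ is semisimple or $|\lambda|<1$ for all $\lambda$'' would, read naively, admit a matrix with a semisimple eigenvalue $1$ together with an eigenvalue $-1$, for which $H^i$ plainly does not converge. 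Your proof silently supplies the missing clause (all eigenvalues other than $1$ must lie strictly inside the unit disk), which is the form actually used later in the paper, e.g.\ in the spectral analysis where $H$ is asserted to have $n-r$ eigenvalues equal to $1$ and $r$ eigenvalues with $|\mu|<1$. So the argument is sound and, if anything, clarifies the statement; it buys the reader a self-contained proof where the paper relies on three external references.
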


\begin{lemma} \label{th:nonsing_C}
If $H$ is semiconvergent, then $\sum_{i = 0}^{\ell - 1} H^i$ is nonsingular for all $\ell \in \mathbb{N}$.
\end{lemma}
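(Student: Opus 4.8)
The plan is to exploit that $S_\ell := \sum_{i=0}^{\ell-1} H^i$ is a polynomial in $H$, namely $S_\ell = p(H)$ with $p(t) = \sum_{i=0}^{\ell-1} t^i$, and to read off its eigenvalues via the spectral mapping theorem for polynomials. Passing $H$ to Jordan form, one sees that $\sigma(S_\ell) = \lbrace p(\lambda) : \lambda \in \sigma(H) \rbrace$, with multiplicities inherited from the Jordan structure, so that $\det S_\ell = \prod_{\lambda} p(\lambda)$, the product being taken over the eigenvalues of $H$ counted with algebraic multiplicity. Consequently $S_\ell$ is nonsingular if and only if $p(\lambda) \neq 0$ for every $\lambda \in \sigma(H)$. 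This reduces the matrix statement to a scalar question about the eigenvalues of $H$.

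Next I would invoke Proposition~\ref{prop:semiconv}: since $H$ is semiconvergent, every eigenvalue $\lambda \in \sigma(H)$ satisfies either $|\lambda| < 1$ or $\lambda = 1$. I then check $p(\lambda) \neq 0$ in each case. If $\lambda = 1$, then $p(1) = \ell \geq 1$, which is nonzero. If $|\lambda| < 1$, then $\lambda \neq 1$, so the closed form $p(\lambda) = (1 - \lambda^\ell)/(1 - \lambda)$ applies; its numerator is nonzero because $|\lambda^\ell| = |\lambda|^\ell < 1$ forces $\lambda^\ell \neq 1$, and its denominator is nonzero because $\lambda \neq 1$. Hence $p(\lambda) \neq 0$ in both cases, and therefore $S_\ell$ is nonsingular for every $\ell \in \mathbb{N}$.

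The crux is the scalar observation that no eigenvalue of $H$ can be an $\ell$th root of unity other than $1$, which is exactly what the spectral constraint from semiconvergence supplies. I expect the only (and modest) obstacle to be justifying the spectral mapping step cleanly without assuming $H$ is diagonalizable; the Jordan-form argument handles this, and it is worth noting that the diagonal entries of $p(J)$ for a Jordan block $J$ with eigenvalue $1$ all equal $p(1) = \ell$, so the conclusion does not actually require $\lambda = 1$ to be semisimple — only the location of the spectrum guaranteed by semiconvergence is used. I would deliberately avoid the tempting factorization $S_\ell (\mathrm{I} - H) = \mathrm{I} - H^\ell$, since $\mathrm{I} - H$ may itself be singular when $1 \in \sigma(H)$, which would obscure rather than simplify the argument.
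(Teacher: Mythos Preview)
Your proof is correct and follows essentially the same route as the paper: both pass to the Jordan form (equivalently, invoke the spectral mapping $\sigma(p(H)) = p(\sigma(H))$), use semiconvergence to split into the cases $\lambda = 1$ and $|\lambda| < 1$, and verify $p(\lambda) \neq 0$ in each case via $p(1) = \ell$ and $p(\lambda) = (1-\lambda^\ell)/(1-\lambda)$ respectively. The paper writes this out as the explicit block decomposition $\sum_{i=0}^{\ell-1} H^i = S\bigl\{ [(\mathrm{I} - \tilde{J})^{-1}(\mathrm{I} - \tilde{J}^\ell)] \oplus (\ell\, \mathrm{I}) \bigr\} S^{-1}$ with $\rho(\tilde{J}) < 1$, which is exactly your argument in concrete form; your closing remark that semisimplicity of the eigenvalue $1$ is not actually needed is a nice extra observation the paper does not make.
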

\begin{proof}
Proposition \ref{prop:semiconv} shows that there exists a nonsingular matrix $S$ such that $J = S^{-1} \! H S = \tilde{J} \oplus \mathrm{I}$ is the Jordan canonical form (JCF) of $H$ with $\rho(\tilde{J}) < 1$ for $\tilde{J} \in \mathbb{R}^{r \times r}$, where $\oplus$ denotes the direct sum and $\rho (A) = \max \lbrace |\lambda|: \lambda \in \sigma (A) \rbrace$ is the spectral radius of $A$.
Hence, $\sum_{i = 0}^{\ell - 1} H^i = S \left\lbrace [ (\mathrm{I} - \tilde{J})^{-1} (\mathrm{I} - \tilde{J}^\ell) ] \oplus (\ell \mathrm{I}) \right\rbrace S^{-1}$ holds for all $\ell \in \mathbb{N}$.
Since $1- \lambda^\ell \not = 0$ holds for all $\lambda \in \sigma(\tilde{J})$ and for all $\ell \in \mathbb{N}$, $\mathrm{I} - \tilde{J}^\ell$ is nonsingular and hence $\sum_{i = 0}^{\ell - 1} H^i$ is nonsingular for all $\ell \in \mathbb{N}$.
\qed
\end{proof}

\begin{lemma} \label{th:ind1_C}
If $H$ is semiconvergent, then $\mathrm{I} - H^\ell$ is a GP matrix for all $\ell \in \mathbb{N}$.
\end{lemma}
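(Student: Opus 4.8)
The plan is to reuse the Jordan canonical form decomposition already established in the proof of Lemma~\ref{th:nonsing_C} and then simply read off the block structure of $\mathrm{I} - H^\ell$. By Proposition~\ref{prop:semiconv} there is a nonsingular $S$ with $S^{-1} H S = \tilde{J} \oplus \mathrm{I}$, where $\rho(\tilde{J}) < 1$ for $\tilde{J} \in \mathbb{R}^{r \times r}$ and the trailing identity block corresponds to the semisimple eigenvalue $1$. Since powers respect the block-diagonal structure, $S^{-1} H^\ell S = \tilde{J}^\ell \oplus \mathrm{I}$, and hence
\[
	\mathrm{I} - H^\ell = S \left[ (\mathrm{I} - \tilde{J}^\ell) \oplus \mathbf{0} \right] S^{-1}.
\]
Thus $\mathrm{I} - H^\ell$ is similar to a block-diagonal matrix whose two blocks I would analyze separately.

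First I would argue that the leading block $\mathrm{I} - \tilde{J}^\ell$ is nonsingular: its eigenvalues are $1 - \mu$ with $\mu \in \sigma(\tilde{J}^\ell)$, and $\rho(\tilde{J}) < 1$ forces $|\mu| < 1$, so none of them vanishes. This is exactly the estimate already used in Lemma~\ref{th:nonsing_C}. The second block is a genuine zero matrix: because the eigenvalue $1$ is semisimple, its Jordan part is the identity $\mathrm{I}$, and $\mathrm{I} - \mathrm{I} = \mathbf{0}$. The point that needs the most care here is precisely this collapse of the eigenvalue-$1$ part to the zero matrix rather than to some nontrivial nilpotent block, and it is guaranteed exactly by the semisimplicity in Proposition~\ref{prop:semiconv}.

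Finally I would invoke the characterization recalled in the introduction, namely that a matrix is a GP matrix if and only if the largest Jordan block associated with the eigenvalue $0$ has size at most one. Since $\mathrm{I} - H^\ell$ is similar to $(\mathrm{I} - \tilde{J}^\ell) \oplus \mathbf{0}$, its eigenvalue $0$ arises solely from the zero block, whose Jordan blocks are all $1 \times 1$, while the nonsingular block contributes no zero eigenvalue; hence the index of $\mathrm{I} - H^\ell$ at $0$ is at most one and $\mathrm{I} - H^\ell$ is a GP matrix for every $\ell \in \mathbb{N}$. Equivalently, one could verify $\mathcal{N}(\mathrm{I} - H^\ell) \cap \mathcal{R}(\mathrm{I} - H^\ell) = \lbrace \boldsymbol{0} \rbrace$ directly from the block form, since the two subspaces are carried by the complementary zero and nonsingular blocks, respectively. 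I do not expect a serious obstacle, as the decomposition does essentially all the work.
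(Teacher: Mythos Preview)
Your proof is correct and follows essentially the same approach as the paper: both use the Jordan decomposition $S^{-1}HS = \tilde{J} \oplus \mathrm{I}$ from Lemma~\ref{th:nonsing_C} to write $\mathrm{I} - H^\ell = S[(\mathrm{I} - \tilde{J}^\ell) \oplus \mathrm{O}]S^{-1}$ and then invoke the nonsingularity of $\mathrm{I} - \tilde{J}^\ell$ to conclude the GP property. The paper's version is simply terser, omitting the explicit discussion of Jordan block sizes at the zero eigenvalue that you spell out.
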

\begin{proof}
If $\mathrm{O}$ is the zero matrix, then $\mathrm{I} - H^\ell = S [ (\mathrm{I} - \tilde{J}^\ell) \oplus \mathrm{O} ] S^{-1}$.
Since $\mathrm{I} - \tilde{J}^\ell$ is nonsingular, $\mathrm{I} - H^\ell$ is a GP matrix for all $\ell \in \mathbb{N}$.
\qed
\end{proof}

Now we show that GMRES preconditioned by a fixed number of matrix splitting iterations determines a solution of $A \boldsymbol{x} = \boldsymbol{b}$.

\begin{theorem} \label{th:iGMRES}
Assume that the iteration matrix $H$ is semiconvergent.
Then, GMRES preconditioned by multistep matrix splitting iterations $C^{(\ell)}$ defined above determines a solution of $A \boldsymbol{x} = \boldsymbol{b}$ without breakdown for all $\boldsymbol{b} \in \mathcal{R}(A)$, for all $\boldsymbol{x}_0 \in \mathbb{R}^n$, and for all $\ell \in \mathbb{N}$.
\end{theorem}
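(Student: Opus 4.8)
The plan is to recognize Algorithm~\ref{alg:RiGMRES} as nothing but standard GMRES applied to the preconditioned system $A C^{(\ell)} \boldsymbol{u} = \boldsymbol{b}$ under the change of variables $\boldsymbol{x} = C^{(\ell)} \boldsymbol{u}$, and then to invoke the group-matrix characterization of breakdown-free GMRES recalled in the introduction (\cite[Theorem 2.6]{BrownWalker1997}, \cite[Theorem 2.2]{MorikuniHayami2015}). Indeed, in line~3 the inner iterations return $\boldsymbol{z}_k = C^{(\ell)} \boldsymbol{v}_k$, so the Arnoldi vectors generated in lines~4--5 span $\mathcal{K}_k(A C^{(\ell)}, \boldsymbol{r}_0)$, the initial residual of the $\boldsymbol{u}$-system is $\boldsymbol{b} - A C^{(\ell)} \boldsymbol{u}_0 = \boldsymbol{b} - A \boldsymbol{x}_0 = \boldsymbol{r}_0$, and the least-squares step in line~7 minimizes $\| \boldsymbol{b} - A \boldsymbol{x} \|$ over $\boldsymbol{x} \in \boldsymbol{x}_0 + C^{(\ell)} \mathcal{K}_k(A C^{(\ell)}, \boldsymbol{r}_0)$. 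Hence the breakdown conditions $\dim A C^{(\ell)} \mathcal{K}_k < \dim \mathcal{K}_k$ and $\dim \mathcal{K}_k < k$ for the $\boldsymbol{u}$-system are exactly those governing Algorithm~\ref{alg:RiGMRES}, and it suffices to prove that GMRES for $A C^{(\ell)} \boldsymbol{u} = \boldsymbol{b}$ determines a solution without breakdown for all $\boldsymbol{b} \in \mathcal{R}(A)$ and all $\boldsymbol{u}_0 \in \mathbb{R}^n$.

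By the cited characterization this reduces to two claims: that $A C^{(\ell)}$ is a GP matrix, and that $\boldsymbol{b} \in \mathcal{R}(A C^{(\ell)})$. For the first I would use the expression $A C^{(\ell)} = M^{-1} (\mathrm{I} - H^\ell) M$ established above, which exhibits $A C^{(\ell)}$ as similar to $\mathrm{I} - H^\ell$. Since $H$ is semiconvergent, Lemma~\ref{th:ind1_C} gives that $\mathrm{I} - H^\ell$ is a GP matrix for every $\ell \in \mathbb{N}$. The group property $\mathcal{N}(\cdot) \cap \mathcal{R}(\cdot) = \lbrace \boldsymbol{0} \rbrace$ is invariant under similarity: if $B = S G S^{-1}$ with $S$ nonsingular, then $\mathcal{N}(B) = S\, \mathcal{N}(G)$ and $\mathcal{R}(B) = S\, \mathcal{R}(G)$, whence $\mathcal{N}(B) \cap \mathcal{R}(B) = S \bigl( \mathcal{N}(G) \cap \mathcal{R}(G) \bigr)$, which is $\lbrace \boldsymbol{0} \rbrace$ precisely when $\mathcal{N}(G) \cap \mathcal{R}(G) = \lbrace \boldsymbol{0} \rbrace$. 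Taking $G = \mathrm{I} - H^\ell$ shows $A C^{(\ell)}$ is a GP matrix for every $\ell$.

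For the second claim I would argue $\mathcal{R}(A C^{(\ell)}) = \mathcal{R}(A)$. Writing $C^{(\ell)} = \bigl( \sum_{i=0}^{\ell-1} H^i \bigr) M^{-1}$, Lemma~\ref{th:nonsing_C} gives that $\sum_{i=0}^{\ell-1} H^i$ is nonsingular, so $C^{(\ell)}$ is nonsingular as a product of nonsingular matrices and $C^{(\ell)} \mathbb{R}^n = \mathbb{R}^n$. Therefore $\mathcal{R}(A C^{(\ell)}) = A\, C^{(\ell)} \mathbb{R}^n = A \mathbb{R}^n = \mathcal{R}(A)$, so every $\boldsymbol{b} \in \mathcal{R}(A)$ lies in $\mathcal{R}(A C^{(\ell)})$. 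The characterization then yields a solution $\boldsymbol{u}_m$ of $A C^{(\ell)} \boldsymbol{u} = \boldsymbol{b}$ reached without breakdown, and $\boldsymbol{x}_m = C^{(\ell)} \boldsymbol{u}_m$ satisfies $A \boldsymbol{x}_m = A C^{(\ell)} \boldsymbol{u}_m = \boldsymbol{b}$, a solution of the original system; nonsingularity of $C^{(\ell)}$ makes $\boldsymbol{x} = C^{(\ell)} \boldsymbol{u}$ a bijection of the two solution sets, so ``determines a solution'' transfers back.

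I expect the main obstacle to lie in the first paragraph, namely in justifying rigorously that the preconditioned Arnoldi recurrence of Algorithm~\ref{alg:RiGMRES} breaks down if and only if standard GMRES for $A C^{(\ell)} \boldsymbol{u} = \boldsymbol{b}$ does, so that the group-matrix criterion applies verbatim; the two matrix-theoretic claims are then immediate from Lemmas~\ref{th:nonsing_C} and~\ref{th:ind1_C} together with the similarity invariance of the group property and the nonsingularity of $C^{(\ell)}$.
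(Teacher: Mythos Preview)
Your proposal is correct and follows essentially the same route as the paper: use Lemma~\ref{th:nonsing_C} to get $C^{(\ell)}$ nonsingular, Lemma~\ref{th:ind1_C} to get $\mathrm{I}-H^\ell$ a GP matrix, and then invoke \cite[Theorem~2.2]{MorikuniHayami2015}. The only cosmetic difference is that the paper works directly with $C^{(\ell)}A = \mathrm{I}-H^\ell$ (so no similarity argument is needed), whereas you work with $A C^{(\ell)} = M(\mathrm{I}-H^\ell)M^{-1}$ and pass through similarity invariance of the GP property; since $C^{(\ell)}$ is nonsingular the two viewpoints are interchangeable.
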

\begin{proof}
Since $\sum_{i = 0}^{\ell - 1} H^i$ is nonsingular for all $\ell \in \mathbb{N}$ from Lemma \ref{th:nonsing_C}, $C^{(\ell)} = \sum_{i = 0}^{\ell - 1} H^i M^{-1}$ is nonsingular for all $\ell \in \mathbb{N}$.
Hence, the preconditioned linear system $C^{(\ell)} \! A \boldsymbol{x} = C^{(\ell)} \boldsymbol{b}$ is equivalent to $A \boldsymbol{x} = \boldsymbol{b}$.
Since $C^{(\ell)} \! A = \mathrm{I} - H^\ell$ is a GP matrix for all $\ell \in \mathbb{N}$ from Lemma \ref{th:ind1_C}, the theorem follows from \cite[Theorem 2.2]{MorikuniHayami2015}.
\qed
\end{proof}

This theorem gives \cite[Theorem 4.6]{MorikuniHayami2015} as a corollary if the linear system \eqref{eq:Ax=b} is a symmetric and positive semidefinite linear system.

Theorem \ref{th:iGMRES} relies on the property that the preconditioned matrix is GP, which is implied by the semiconvergence of the iteration matrix, irrespective of the property of $A$.
Hence, we may extend the class of singular linear systems that GMRES can solve by combining with preconditioners.
This means that even though $A$ is not a GP matrix, the multistep matrix splitting iteration preconditioned matrix is a GP matrix for $H$ semiconvergent, and GMRES preconditioned by the multistep matrix splitting iterations determines a solution without breakdown (see Theorem \ref{th:GMRESanyind} in Appendix).

Semiconvergence is a simple and convenient property for deciding if a matrix splitting method is feasible as multistep matrix splitting iterations for preconditioning GMRES in the singular case.
Indeed, there are many matrix splitting iterations whose iteration matrix can be semiconvergent.
They are powerful when used as matrix splitting preconditioners for Krylov subspace methods, and potentially useful as multistep matrix splitting iteration preconditioning for GMRES such as the Jacobi, Gauss-Seidel, SOR, and symmetric SOR (SSOR) methods \cite{Dax1990}, extrapolated methods \cite{Song1999}, two-stage methods \cite{Wang2007a}, the GSS method \cite{CaoMiao2016}, and the HSS method and and its variants \cite{Bai2010}, \cite{LiLiuPeng2012}, \cite{ChenLiu2013}, \cite{WuLi2014}, \cite{YangWuXu2014}.

Theorem \ref{th:iGMRES} applies to some trivial examples.
For example, if $A = L + D + L^\mathsf{T}$ is symmetric and positive semidefinite and the SOR splitting matrix is $M = \omega^{-1} (D + \omega L)$, where $D$ is diagonal, $L$ is strictly lower triangular, and $\omega \in \mathbb{R}$, then the SOR iteration matrix $H = M^{-1} N$ is semiconvergent for $\omega \in (0, 2)$ \cite[Theorem 13]{Dax1990}.
On the other hand, if an iteration matrix $H$ is semiconvergent, the extrapolated iteration matrix $(1 - \gamma) \mathrm{I} + \gamma H$ is also semiconvergent for $0 < \gamma < 2 / (1 + \nu (H))$ \cite[Theorem 2.2]{Song1999}.
We will recall conditions such that the GSS and HSS iteration matrices are semiconvergent in sections \ref{sec:GSS} and \ref{sec:HSS}, respectively.
Hence, these multistep matrix splitting iterations can serve as preconditioning for GMRES.

\subsection{Spectral analysis and convergence bound}
Next, consider decomposing GMRES preconditioned by multistep matrix splitting iterations into the $\mathcal{R}(A C^{(\ell)}) = \mathcal{R}(A)$ and $\mathcal{R}(A)^\perp$ components to analyze the spectral property of the preconditioned matrix (cf.\ \cite{HayamiSugihara2011}).
Assume that the iteration matrix $H$ is semiconvergent throughout this subsection and $\boldsymbol{b} \in \mathcal{R}(A)$.
Let $r = \rank \! A$, $Q_1 \in \mathbf{R}^{n \times r}$ such that $\mathcal{R}(Q_1) = \mathcal{R}(A)$, $Q_2 \in \mathbf{R}^{n \times (n-r)}$ such that $\mathcal{R}(Q_2) = \mathcal{R}(A)^\perp$, and $Q = \left[ Q_1, Q_2 \right]$ be orthogonal.
Then, GMRES applied to $A C^{(\ell)} \boldsymbol{u} = \boldsymbol{b}$ can be seen as GMRES applied to $(Q^\mathsf{T} \! A C^{(\ell)} \! Q) Q^\mathsf{T} \! \boldsymbol{u} = Q^\mathsf{T} \! \boldsymbol{b}$, or
\begin{align}
	\begin{bmatrix}
		Q_1^\mathsf{T} \! A C^{(\ell)} Q_1 & Q_1^\mathsf{T} \! A C^{(\ell)} Q_2\\
		\mathrm{O} & \mathrm{O}
	\end{bmatrix}
	\begin{bmatrix}
		Q_1^\mathsf{T} \boldsymbol{u} \\
		Q_2^\mathsf{T} \boldsymbol{u}
	\end{bmatrix}
	\equiv
	\begin{bmatrix} 
		A_{11} & A_{12} \\
		\mathrm{O} & \mathrm{O}
	\end{bmatrix}
	\begin{bmatrix}
		\boldsymbol{u}^1 \\
		\boldsymbol{u}^2
	\end{bmatrix}
	=
	\begin{bmatrix}
		Q_1^\mathsf{T} \boldsymbol{b} \\
		Q_2^\mathsf{T} \! \boldsymbol{b} 
	\end{bmatrix}
	\equiv
	\begin{bmatrix}
		\boldsymbol{b}^1 \\
		\boldsymbol{0}
	\end{bmatrix}.
	\label{eq:decompeq}
\end{align}
Assume that the initial iterate satisfies $\boldsymbol{u}_0 \in \mathcal{R}(A)$.
Then, the $k$th iterate of GMRES applied to \eqref{eq:decompeq} is given by 
\begin{align*}
	Q^\mathsf{T} \boldsymbol{u}_k \equiv
	\begin{bmatrix}
		\boldsymbol{u}_k^1 \\
		\boldsymbol{u}_k^2
	\end{bmatrix} \in Q^\mathsf{T} \boldsymbol{u}_0 + Q^\mathsf{T} \mathcal{K}_k (A C^{(\ell)}, \boldsymbol{r}_0) = 
	\begin{bmatrix}
		\boldsymbol{u}_0^1 \\
		\boldsymbol{0}
	\end{bmatrix} + \mathcal{K}_k \left( \begin{bmatrix}
		A_{1 1} & A_{1 2} \\
		\mathrm{O} & \mathrm{O}
	\end{bmatrix}, 
	\begin{bmatrix}
		\boldsymbol{r}_0^1 \\
		\boldsymbol{0}
	\end{bmatrix} \right)
\end{align*}
which minimizes $\| Q^\mathsf{T} (\boldsymbol{b} - A C^{(\ell)} \boldsymbol{u}_k) \|$, or $\boldsymbol{u}_k^1 \in \boldsymbol{u}_0^1 + \mathcal{K}_k (A_{1 1}, \boldsymbol{r}_0^1)$ which minimizes $\| \boldsymbol{r}_k \| = \| \boldsymbol{b}^1 - A_{1 1} \boldsymbol{u}_k^1 \|$.
This means that $\boldsymbol{u}_k^1$ is equal to the $k$th iterate of GMRES applied to $A_{1 1} \boldsymbol{u}^1 = \boldsymbol{b}^1$ for all $k$ (cf.\ \cite[section 2.5 ]{HayamiSugihara2011}).

Now we give the spectrum of the preconditioned matrix $A C^{(\ell)}$ to present a convergence bound on GMRES preconditioned by multistep matrix splitting iterations.
Let $r = \rank A$.
The $r$ nonzero eigenvalues of $A C^{(\ell)}$ are the eigenvalues of $A_{11}$, since
\begin{align*}
	\det \left(A C^{(\ell)} - \lambda \mathrm{I} \right) & = \det \left( \begin{bmatrix}
						A_{1 1} - \lambda \I_r & A_{1 2} \\
						\mathrm{O} & \lambda \I_{n-r}
					\end{bmatrix}	 \right) 
	= (-\lambda)^{n-r} \det (A_{11} - \lambda \I_r)
\end{align*}
and $A_{11}$ is nonsingular $\Longleftrightarrow$ $A C^{(\ell)}$ is a GP matrix \cite[Theorem 2.3]{HayamiSugihara2011}.
If $\mu$ is an eigenvalue of $H$, then $A C^{(\ell)} = M^{-1} (\mathrm{I} - H^\ell) M$ has an eigenvalue $\lambda = 1 - \mu^\ell$.
From Proposition \ref{prop:semiconv}, $H$ has $r$ eigenvalues such that $| \mu | < 1$ and $n - r$ eigenvalues such that $\mu = 1$.
For $| \mu | < 1$, we obtain $|\lambda - 1| = | \mu |^\ell \leq \nu(H)^\ell < 1$, where $\nu (H) = \max \lbrace | \lambda | : \lambda \in \sigma(H) \backslash \lbrace 1 \rbrace \rbrace$ is the pseudo spectral radius of $H$, i.e., the $r$ eigenvalues of $A C^{(\ell)}$ are in a disk with center at $1$ and radius $\nu(H)^{\ell} < 1$.
For $\mu = 1$, we have $\lambda = 0$, i.e., the remaining $n - r$ eigenvalues are zero.

\begin{theorem} \label{th:bound}
Let $\boldsymbol{r}_k$ be the $k$th residual of GMRES preconditioned by $\ell$ matrix splitting iterations $C^{(\ell)}$ and $T$ be the Jordan basis of $A C^{(\ell)}$.
Assume that $H$ is semi-convergent.
Then, we have $\| \boldsymbol{r}_k \| \leq \kappa(T) \sum_{i=0}^{\tau(k, d)} \binom k i \rho(H)^{k \ell - i} \| \boldsymbol{r}_0 \|$ for all $\boldsymbol{x}_0 \in \mathcal{R}(C^{(\ell)} A)$ and for all $\boldsymbol{b} \in \mathcal{R}(A)$, where $\kappa(T) = \| T \| \| T^{-1} \|$, $d$ is the size of the largest Jordan block corresponding to a nonzero eigenvalue of $C^{(\ell)} A$, and $\tau(k, d) = \min(k, d-1)$.
\end{theorem}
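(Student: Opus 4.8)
The plan is to reduce the analysis to the nonsingular, $r \times r$ compressed system $A_{11}\boldsymbol{u}^1 = \boldsymbol{b}^1$, where $A_{11} = Q_1^\mathsf{T} A C^{(\ell)} Q_1$, and then invoke the standard GMRES convergence bound for a diagonalizable-up-to-Jordan-blocks matrix. The decomposition established just before the statement already shows that $\boldsymbol{u}_k^1$ coincides with the $k$th iterate of GMRES applied to $A_{11}\boldsymbol{u}^1 = \boldsymbol{b}^1$, and that the residual norm $\|\boldsymbol{r}_k\|$ equals $\|\boldsymbol{b}^1 - A_{11}\boldsymbol{u}_k^1\|$. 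The hypothesis $\boldsymbol{x}_0 \in \mathcal{R}(C^{(\ell)}A)$ ensures $\boldsymbol{u}_0 \in \mathcal{R}(A)$ (equivalently $\boldsymbol{u}_0^2 = \boldsymbol{0}$), so the reduction is legitimate. First I would restate this reduction, so that the problem becomes bounding the residual of GMRES for the genuinely nonsingular matrix $A_{11}$, whose eigenvalues are exactly the $r$ nonzero eigenvalues $\lambda = 1 - \mu^\ell$ of $A C^{(\ell)}$.

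\textbf{The polynomial minimization bound.}
Next I would use the optimality characterization of GMRES: since $\boldsymbol{u}_k^1$ minimizes the residual over $\boldsymbol{u}_0^1 + \mathcal{K}_k(A_{11}, \boldsymbol{r}_0^1)$, we have the classical estimate
\begin{align*}
	\| \boldsymbol{r}_k \| \leq \min_{\substack{p \in \mathcal{P}_k \\ p(0) = 1}} \| p(A_{11}) \| \, \| \boldsymbol{r}_0 \|,
\end{align*}
where $\mathcal{P}_k$ is the set of polynomials of degree at most $k$. Writing the Jordan decomposition $A_{11} = T \hat J T^{-1}$, submultiplicativity gives $\| p(A_{11}) \| \leq \kappa(T) \max_j \| p(\hat J_j) \|$ over the Jordan blocks $\hat J_j$. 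The size of the largest such block is $d$, so each block has size at most $d$.

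\textbf{Choosing the polynomial and bounding a Jordan block.}
The key computational step is to choose a good polynomial and bound $\| p(\hat J_j) \|$ for a single Jordan block. Since every nonzero eigenvalue of $A C^{(\ell)}$ lies in the disk of center $1$ and radius $\rho(H)^\ell \le \nu(H)^\ell < 1$ (the spectral fact derived above), I would take $p(\lambda) = (1 - \lambda)^k$, which satisfies $p(0) = 1$. For a Jordan block $\hat J_j$ of size at most $d$ with eigenvalue $\lambda_j = 1 - \mu_j^\ell$, the matrix $p(\hat J_j)$ is upper triangular with entries determined by the derivatives $p^{(i)}(\lambda_j)$; explicitly $p^{(i)}(\lambda_j) = \pm \binom{k}{i} i! (1 - \lambda_j)^{k-i} = \pm \binom{k}{i} i! (\mu_j^\ell)^{k-i}$, so the superdiagonal entries involve $\binom{k}{i} (\mu_j)^{\ell(k-i)}$ for $i = 0, 1, \dots, \min(k, d-1)$. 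Bounding $|\mu_j| \le \rho(H)$ and summing the at most $\tau(k,d)+1$ nonzero band entries yields
\begin{align*}
	\| p(\hat J_j) \| \leq \sum_{i=0}^{\tau(k,d)} \binom{k}{i} \rho(H)^{\ell(k-i)} \le \sum_{i=0}^{\tau(k,d)} \binom{k}{i} \rho(H)^{k\ell - i},
\end{align*}
where the last inequality uses $\rho(H) \le 1$ to replace the exponent $\ell(k-i) = k\ell - \ell i$ by the larger quantity $k\ell - i$ (valid since $\ell \ge 1$). Combining with the factor $\kappa(T)$ gives the stated bound.

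\textbf{Main obstacle.}
I expect the delicate point to be the passage from the sharp exponent $\rho(H)^{\ell(k-i)}$ that naturally arises from $(1-\lambda)^k$ to the weaker exponent $\rho(H)^{k\ell - i}$ appearing in the statement, and making sure the bound on the norm of a single Jordan block of size $\le d$ is carried out cleanly (e.g., via a norm estimate that only keeps the band entries up to index $\tau(k,d) = \min(k, d-1)$). One must also verify that taking the maximum over blocks is correctly subsumed into the single sum, since every block has size at most $d$ and contributes the same index range. The spectral clustering and the GP property of $A C^{(\ell)}$ (hence nonsingularity of $A_{11}$) are already in hand from the preceding discussion and Lemma~\ref{th:ind1_C}, so the remaining work is essentially this Jordan-block estimate.
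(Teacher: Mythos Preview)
Your argument is essentially correct and parallels the paper's, but the paper's proof is much terser: it applies \cite[Theorem~1]{Bai2000} directly to the singular matrix $AC^{(\ell)}$ (that result already restricts the maximum to Jordan blocks $J_i$ at \emph{nonzero} eigenvalues, which is legitimate once $\boldsymbol{r}_0\in\mathcal{R}(AC^{(\ell)})$), and then cites \cite[Theorems~2,~5]{Bai2000} for the bound $\min_{p}\max_i\|p(J_i)\|\le\sum_{i=0}^{\tau(k,d)}\binom{k}{i}\rho(H)^{k\ell-i}$. Your route via the reduction to the nonsingular compressed system $A_{11}$ and the explicit choice $p(\lambda)=(1-\lambda)^k$ is exactly what those cited results do under the hood, so conceptually the two proofs coincide. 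One discrepancy worth flagging: the statement fixes $T$ to be the Jordan basis of $AC^{(\ell)}$, whereas your argument produces $\kappa(\,\cdot\,)$ for the Jordan basis of $A_{11}$; these are not the same matrix in general, so to match the stated constant you should either work with $AC^{(\ell)}$ throughout (as the paper does) or argue that a Jordan basis of $AC^{(\ell)}$ can be built whose restriction to $\mathcal{R}(A)$ gives one for $A_{11}$ with no larger condition number.
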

\begin{proof}
Theorem \ref{th:iGMRES} ensures that GMRES preconditioned by multistep matrix splitting iterations determines a solution of $A \boldsymbol{x} = \boldsymbol{b}$ without breakdown for all $\boldsymbol{b} \in \mathcal{R}(A)$ and for all $\boldsymbol{x}_0 \in \mathbf{R}^n$.
From \cite[Theorem 1]{Bai2000}, we have 
\begin{align*}
	\| \boldsymbol{r}_k \| = \min_{p \in \mathbb{P}_k, p(0) = 1} \| p(A C^{(\ell)}) \boldsymbol{r}_0 \| \leq \kappa(T) \min_{p \in \mathbb{P}_k, p(0) = 1} \max_{1 \leq i \leq s} \| p(J_i) \| \| \boldsymbol{r}_0 \|,
\end{align*}
where $\mathbb{P}_k$ is the set of all polynomials of degree not exceeding $k$ and $J_i$ is a Jordan block of $A C^{(\ell)}$ corresponding to a nonzero eigenvalue, $i = 1, 2, \dots, s$.
The second factor is bounded above by $\min_{p \in \mathbb{P}_k, p(0) = 1} \max_{1 \leq i \leq s} \| p(J_i) \| \leq \sum_{i=0}^{\tau(k, d)} \binom k i \rho(H)^{k \ell - i}$ \cite[Theorems 2, 5]{Bai2000}.
\qed
\end{proof}

Note that the residual $\| \boldsymbol{r}_k \|$ does not necessarily depend only on the eigenvalues of $A C^{(\ell)}$ when $\kappa(T)$ is large (see \cite{TebbensMeurant2014} and references therein).

\subsection{Computational complexity}
Compare GMRES for $k \ell$ iterations preconditioned by one step of a matrix splitting method with that for $k$ iterations preconditioned by $\ell$ matrix splitting iterations of the same matrix splitting method in terms of the Krylov subspaces for the iterate $\boldsymbol{x}_{k \ell}$, $\boldsymbol{x}_k$ and computational complexity, since they use the same total number of matrix splitting iterations.

\begin{proposition} \label{prop:spacedim}
If $C^{(\ell)}$ and $H$ are as defined above and $H$ is semiconvergent, then we have 
\begin{align*}
	\mathcal{K}_k (C^{(\ell)} \! A, C^{(\ell)} \boldsymbol{r}_0) \subseteq \mathcal{K}_{k \ell} (C^{(1)} \! A, C^{(1)} \boldsymbol{r}_0).
\end{align*}
\end{proposition}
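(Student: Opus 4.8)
The plan is to exploit the fact that both Krylov subspaces in the statement are generated by polynomials in the \emph{single} matrix $B := C^{(1)} A = \mathrm{I} - H$ acting on the \emph{single} vector $\boldsymbol{w} := C^{(1)} \boldsymbol{r}_0 = M^{-1} \boldsymbol{r}_0$, and then to compare polynomial degrees. Recall from the discussion above that $C^{(\ell)} A = \mathrm{I} - H^\ell$ and $C^{(\ell)} = \bigl( \sum_{i=0}^{\ell-1} H^i \bigr) M^{-1}$, while $C^{(1)} A = \mathrm{I} - H$ and $C^{(1)} = M^{-1}$. Substituting $H = \mathrm{I} - B$, I would first record the two polynomial identities
\begin{align*}
	C^{(\ell)} A = \mathrm{I} - (\mathrm{I} - B)^\ell =: p(B), \qquad C^{(\ell)} = \Bigl( {\textstyle\sum_{i=0}^{\ell-1}} (\mathrm{I} - B)^i \Bigr) M^{-1} =: q(B)\, M^{-1}.
\end{align*}
Here $p$ is a polynomial of degree exactly $\ell$ with $p(0) = 0$, and $q$ is a polynomial of degree exactly $\ell - 1$; in particular $C^{(\ell)} \boldsymbol{r}_0 = q(B) \boldsymbol{w}$.

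Next I would take an arbitrary element of $\mathcal{K}_k(C^{(\ell)} A, C^{(\ell)} \boldsymbol{r}_0)$, namely $\sum_{j=0}^{k-1} c_j (C^{(\ell)} A)^j C^{(\ell)} \boldsymbol{r}_0$ for scalars $c_j$, and use the identities above to rewrite it as $s(B) \boldsymbol{w}$, where $s(t) = \sum_{j=0}^{k-1} c_j\, p(t)^j q(t)$ (the factors commute since both are polynomials in $B$). The crux is a degree count: as $\deg p = \ell$ and $\deg q = \ell - 1$, the summand indexed by $j$ has degree $j\ell + (\ell - 1)$, which is largest at $j = k-1$, giving $\deg s \le (k-1)\ell + (\ell - 1) = k\ell - 1$. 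Since $\mathcal{K}_{k\ell}(C^{(1)} A, C^{(1)} \boldsymbol{r}_0) = \mathcal{K}_{k\ell}(B, \boldsymbol{w})$ is precisely the set of vectors $\tilde{s}(B) \boldsymbol{w}$ with $\deg \tilde{s} \le k\ell - 1$, the element lies in the larger subspace, and the asserted inclusion follows.

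The argument is purely algebraic, so I expect no genuine obstacle; the only point requiring care is the degree bookkeeping. In particular one must verify that the extra factor $q(B)$ of degree $\ell-1$, arising from the preconditioned right-hand side $C^{(\ell)} \boldsymbol{r}_0$, is correctly accounted for — it is exactly this factor that consumes the remaining room and makes the bound $k\ell - 1$ attained, hence sharp, consistent with the two schemes using the same total number of inner iterations. It is worth noting that semiconvergence of $H$ is not actually needed for the set-theoretic inclusion itself; it is inherited from the ambient hypotheses and serves (via Lemma~\ref{th:nonsing_C}) to guarantee that $C^{(\ell)}$ is nonsingular, so that the preconditioned systems and their Krylov subspaces are the meaningful objects to compare.
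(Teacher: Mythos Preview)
Your proof is correct and follows essentially the same route as the paper: both arguments set $\hat A = C^{(1)}A = M^{-1}A$ and $\hat{\boldsymbol r}_0 = C^{(1)}\boldsymbol r_0 = M^{-1}\boldsymbol r_0$, observe that $C^{(\ell)}A$ and $C^{(\ell)}\boldsymbol r_0$ are polynomials of degree $\ell$ and $\ell-1$ in $\hat A$ acting on $\hat{\boldsymbol r}_0$, and conclude by the degree bound $k\ell-1$. The only cosmetic difference is that the paper packages the degree count as an induction on $k$ whereas you do it in one stroke; your remark that semiconvergence of $H$ is not actually needed for the bare inclusion is also correct.
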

\begin{proof}
The proof is by induction.
Let $\hat{A} = M^{-1} \! A$ and $\hat{\boldsymbol{r}}_0 = M^{-1} \boldsymbol{r}_0$.
Consider the case $k = 1$.
We have $\mathcal{K}_1 (C^{(\ell)} \! A, C^{(\ell)} \boldsymbol{r}_0) = \spn \lbrace C^{(\ell)} \boldsymbol{r}_0 \rbrace$ and
\begin{align*}
	C^{(\ell)} \boldsymbol{r}_0 = \sum_{i=0}^{\ell-1} (\mathrm{I} - M^{-1} \! A)^i M^{-1} \boldsymbol{r}_0 = \sum_{i=0}^{\ell-1} \sum_{j=0}^{i} \binom{i}{j} (- \hat{A})^j \hat{\boldsymbol{r}}_0.
\end{align*}
Since $C^{(\ell)} \boldsymbol{r}_0 \in \spn \lbrace \hat{\boldsymbol{r}}_0, \hat{A} \hat{\boldsymbol{r}}_0, \dots, \hat{A}^{\ell-1} \hat{\boldsymbol{r}}_0 \rbrace
$, we have
\begin{align*}
	\mathcal{K}_1 (C^{(\ell)} \! A, C^{(\ell)} \boldsymbol{r}_0) \subseteq \mathcal{K}_\ell (C^{(1)} \! A, C^{(1)} \boldsymbol{r}_0).
\end{align*}
Next, assume that $\mathcal{K}_k (C^{(\ell)} \! A, C^{(\ell)} \boldsymbol{r}_0) \subseteq \mathcal{K}_{k \ell} (C^{(1)} \! A, C^{(1)} \boldsymbol{r}_0) = \mathcal{K}_{k\ell} (\hat{A}, \hat{\boldsymbol{r}}_0)$ holds.
Then, 
\begin{align*}
	\mathcal{K}_{k+1} (C^{(\ell)} \! A, C^{(\ell)} \boldsymbol{r}_0) & = \mathcal{K}_k (C^{(\ell)} \! A, C^{(\ell)} \boldsymbol{r}_0) + \spn \lbrace (C^{(\ell)} \! A)^k C^{(\ell)} \boldsymbol{r}_0 \rbrace, \\
	\mathcal{K}_{(k+1)\ell} (C^{(1)} \! A, C^{(1)} \boldsymbol{r}_0) & = \mathcal{K}_{k\ell} (\hat{A}, \hat{\boldsymbol{r}}_0) + 
	\mathcal{K}_k (\hat{A}, \hat{A}^{k \ell} \hat{\boldsymbol{r}}_0).
\end{align*}
From $C^{(\ell)} \! A = \sum_{i=0}^{\ell-1} (\mathrm{I} - \hat{A})^i \hat{A}$, we have 
\begin{align*}
	(C^{(\ell)} \! A)^k C^{(\ell)} \boldsymbol{r}_0 = \left[\sum_{i=0}^{\ell-1} (\mathrm{I} - \hat{A})^i \right]^{k+1} \! \hat{A}^k \hat{\boldsymbol{r}}_0,
\end{align*}
which belongs to $\mathcal{K}_{(k+1)\ell} (\hat{A}, \hat{\boldsymbol{r}}_0) = \mathcal{K}_{(k+1)\ell} (C^{(1)} \! A, C^{(1)} \boldsymbol{r}_0)$.
Hence, \break$\mathcal{K}_{k+1} (C^{(\ell)} \! A, C^{(\ell)} \boldsymbol{r}_0) \subseteq \mathcal{K}_{k \ell} (C^{(1)} \! A, C^{(1)} \boldsymbol{r}_0) = \mathcal{K}_{(k+1) \ell} (\hat{A}, \hat{\boldsymbol{r}}_0)$ holds.
\qed
\end{proof}

This proposition shows that GMRES preconditioned by one matrix splitting iteration gives an optimal Krylov subspace for the iterate, i.e., any Krylov subspace given by GMRES for $k$ iterations preconditioned by $\ell$ matrix splitting iterations is not larger than the one given by GMRES for $k \ell$ iterations preconditioned by one matrix splitting iteration.
However, GMRES preconditioned by one matrix splitting iteration is not necessarily more efficient than GMRES preconditioned by more than one matrix splitting iteration, as will be seen in section \ref{sec:nuex}.
Indeed, while GMRES for $k \ell$ iterations preconditioned by one matrix splitting iteration requires $k \ell$ matrix-vector products of $A$ with $\boldsymbol{z}_k$ and $k \ell$ orthogonalizations, GMRES for $k$ iterations preconditioned by $\ell$ matrix splitting iterations requires $k$ matrix-vector products of $A$ with $\boldsymbol{z}_k$ and $k$ orthogonalizations.
Hence, GMRES for $k \ell$ iterations preconditioned by one matrix splitting iteration needs more computations.
Therefore, GMRES preconditioned by more than one matrix splitting iteration may be more efficient.

Moreover, Proposition \ref{prop:spacedim} gives a lower bound of the number of iterations of GMRES preconditioned by multistep matrix splitting iterations which is required to determine a solution.
Let $s$ be the smallest integer such that $\mathcal{K}_s (C^{(1)} \! A, C^{(1)} \boldsymbol{r}_0) < s$.
Assume that GMRES preconditioned by $\ell$ matrix splitting iterations determines a solution at the $k$th step, where $k$ is the smallest integer such that $\mathcal{K}_k (C^{(\ell)} \! A, C^{(\ell)} \boldsymbol{r}_0) = \mathcal{K}_{s-1} (C^{(1)} \! A, C^{(1)} \boldsymbol{r}_0)$. 
Then, $k$ is larger than $(s-1) / \ell$.
Hence, GMRES preconditioned by $\ell$ matrix splitting iterations requires more than $(s - 1) / \ell$ iterations to determine a solution.

\section{Flexible GMRES preconditioned by multistep matrix splitting iterations.}\label{sec:FGMRES}
The preconditioners given in section \ref{sec:iip} uses a fixed number of matrix splitting iterations.
This can be extended to allow a variable number of matrix splitting iterations for each iteration in line 3, Algorithm \ref{alg:RiGMRES} (flexible GMRES (FGMRES) method \cite{Saad1993}).
Let $C^{(\ell_k)}$ be the multistep matrix splitting iteration preconditioning matrix for the $k$th iteration.
Then, the FGMRES iterate $\boldsymbol{x}_k^\mathrm{F}$ is determined over the space $\boldsymbol{x}_0 + \mathcal{R}(Z_k^\mathrm{F}) = \boldsymbol{x}_0 + \mathcal{R}([C^{(\ell_1)} \boldsymbol{v}_1^\mathrm{F}, C^{(\ell_2)} \boldsymbol{v}_2^\mathrm{F}, \dots, C^{(\ell_k)} \boldsymbol{v}_k^\mathrm{F}])$, which is no longer a Krylov subspace.
Quantities denoted with superscript $\mathrm{F}$ are relevant to FGMRES hereafter.
Hence, Theorem \ref{th:iGMRES} does not apply to FGMRES preconditioned by multistep matrix splitting iterations.

Similarly to the breakdown of GMRES due to the linear dependence of $\boldsymbol{v}_{k+1}$ on $\boldsymbol{v}_1$, $\boldsymbol{v}_2$, \dots, $\boldsymbol{v}_k$, FGMRES may break down with $h_{k+1, k}^\mathrm{F} = 0$ due to the matrix-vector product $A C^{(\ell_k)} \boldsymbol{v}_k^\mathrm{F} = \boldsymbol{0}$, i.e., $\boldsymbol{v}_k^\mathrm{F} \in \mathcal{N}(A C^{(\ell_k)})$, in the singular case.
If $C^{(\ell_k)}$ is nonsingular, then for $\boldsymbol{b} \in \mathcal{R}(A)$, $\boldsymbol{v}_k^\mathrm{F} \not = \boldsymbol{0}$ $\Longleftrightarrow$ $A C^{(\ell_k)} \boldsymbol{v}_k^\mathrm{F} \not = \boldsymbol{0}$ is equivalent to that $A C^{(\ell_k)}$ is a GP matrix, which is given by the iteration matrix $H$ semiconvergent.

Notice that \cite[Proposition 2.2]{Saad1993} holds irrespective of the nonsingularity of $A$: if $\boldsymbol{r}_0 \not = \boldsymbol{0}$, $h_{i+1, i}^\mathrm{F} \not = 0$ for $i = 1, 2, \dots, k-1$, and $H_k^\mathrm{F} = \lbrace h_{i, j}^\mathrm{F} \rbrace \in \mathbb{R}^{k \times k}$ is nonsingular, then $h_{k+1, k}^\mathrm{F} = 0$ is equivalent to that the FGMRES iterate $\boldsymbol{x}_k^\mathrm{F}$ is uniquely determined and is a solution of $A \boldsymbol{x} = \boldsymbol{b}$.
Here, the nonsingularity of $H_k^\mathrm{F}$ is ensured by an additional assumption as follows.
Let ${Q_k}^\mathsf{T} R_{k+1, k} = H_{k+1, k}^\mathrm{F}$ be the QR factorization of $H_{k+1, k}^\mathrm{F}$, where $Q_k$ is the product of Givens rotations $\Omega_k \Omega_{k-1} \cdots \Omega_1$ such as $\Omega_i = \mathrm{I}_{i-1} \oplus \left[
	\begin{smallmatrix}
		c_i & s_i \\
		-s_i & c_i
	\end{smallmatrix}
	\right]
	\oplus \mathrm{I}_{k-i}$ and $R_{k+1, k} \in \mathbb{R}^{(k+1) \times k}$ is upper triangular.
The scalars $c_k$ and $s_k$ are chosen to satisfy $c_k^2 + s_k^2 = 1$ and to vanish the $(k+1, k)$ entry of $\Omega_{k-1} \cdots \Omega_1 H_{k+1, k}^\mathrm{F}$.
It follows from \cite[Lemma 4]{Vuik1995} that if $\| \boldsymbol{v}_{k}^\mathrm{F} - A \boldsymbol{z}_k^\mathrm{F} \| < |c_{k-1}|$ for $c_1 \not = 0$, $c_2 \not = 0$, \dots, $c_{k-1} \not = 0$ and $\boldsymbol{r}_k^\mathrm{F} \not = \boldsymbol{0}$, then $H_k$ is nonsingular.
Thus, we have the following.

\begin{theorem} \label{th:FGMRES}
If the iteration matrix $H$ defined above is semiconvergent and the multistep matrix splitting iterations attain the residual norm $\| \boldsymbol{v}_k^\mathrm{F} - A \boldsymbol{z}_k^\mathrm{F} \| < |c_k|$ for the $k$th iteration, then FGMRES preconditioned by the multistep matrix splitting iterations determines a solution of $A \boldsymbol{x} = \boldsymbol{b}$ for all  $\boldsymbol{b} \in \mathcal{R}(A)$ and for all $\boldsymbol{x}_0 \in \mathbb{R}^n$.
\end{theorem}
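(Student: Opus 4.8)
The plan is to run an induction on the outer iteration index $k$, showing that, as long as FGMRES has not yet produced a solution (i.e.\ $\boldsymbol{r}_{k-1}^\mathrm{F} \neq \boldsymbol{0}$), the flexible Arnoldi process proceeds without a degenerate breakdown, and that the first occurrence of $h_{k+1,k}^\mathrm{F} = 0$ necessarily yields a solution. First I would record the standing facts the hypotheses supply. Since $\boldsymbol{b} \in \mathcal{R}(A)$, the initial residual $\boldsymbol{r}_0 = \boldsymbol{b} - A \boldsymbol{x}_0 \in \mathcal{R}(A)$ for \emph{every} $\boldsymbol{x}_0 \in \mathbb{R}^n$. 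Because $H$ is semiconvergent, Lemma \ref{th:nonsing_C} gives that each $C^{(\ell_k)}$ is nonsingular, whence $\mathcal{R}(A C^{(\ell_k)}) = \mathcal{R}(A)$, and Lemma \ref{th:ind1_C} together with the similarity $A C^{(\ell_k)} = M^{-1}(\mathrm{I} - H^{\ell_k}) M$ makes $A C^{(\ell_k)}$ a GP matrix for each $k$.

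The core of the induction is to confine the Arnoldi vectors to $\mathcal{R}(A)$ and to exclude the premature vanishing of the matrix-vector product. I would argue that $\boldsymbol{v}_1^\mathrm{F} = \boldsymbol{r}_0 / \beta \in \mathcal{R}(A)$, and inductively that each $\boldsymbol{v}_{k+1}^\mathrm{F}$ is obtained by orthogonalizing $A C^{(\ell_k)} \boldsymbol{v}_k^\mathrm{F} \in \mathcal{R}(A C^{(\ell_k)}) = \mathcal{R}(A)$ against $\boldsymbol{v}_1^\mathrm{F}, \dots, \boldsymbol{v}_k^\mathrm{F}$, so all $\boldsymbol{v}_i^\mathrm{F}$ stay in $\mathcal{R}(A)$. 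For a nonzero $\boldsymbol{v}_k^\mathrm{F} \in \mathcal{R}(A) = \mathcal{R}(A C^{(\ell_k)})$, the GP property $\mathcal{N}(A C^{(\ell_k)}) \cap \mathcal{R}(A C^{(\ell_k)}) = \lbrace \boldsymbol{0} \rbrace$ forces $\boldsymbol{v}_k^\mathrm{F} \notin \mathcal{N}(A C^{(\ell_k)})$, hence $A \boldsymbol{z}_k^\mathrm{F} = A C^{(\ell_k)} \boldsymbol{v}_k^\mathrm{F} \neq \boldsymbol{0}$; this is precisely the equivalence noted in the paragraph preceding the theorem, and it removes the only mechanism by which $h_{k+1,k}^\mathrm{F}$ could drop to zero without a solution being present.

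With non-degeneracy in hand, I would invoke the residual-norm hypothesis $\| \boldsymbol{v}_k^\mathrm{F} - A \boldsymbol{z}_k^\mathrm{F} \| < |c_k|$ together with \cite[Lemma 4]{Vuik1995} to conclude that $H_k^\mathrm{F}$ is nonsingular at every step reached with $\boldsymbol{r}_k^\mathrm{F} \neq \boldsymbol{0}$ and $c_1, \dots, c_{k-1} \neq 0$. Then, at the first step where $h_{k+1,k}^\mathrm{F} = 0$ with $\boldsymbol{r}_0 \neq \boldsymbol{0}$ and $h_{i+1,i}^\mathrm{F} \neq 0$ for $i < k$, the nonsingularity of $H_k^\mathrm{F}$ lets me apply \cite[Proposition 2.2]{Saad1993}, which shows that $\boldsymbol{x}_k^\mathrm{F}$ is uniquely determined and solves $A \boldsymbol{x} = \boldsymbol{b}$. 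Termination then follows from finite dimensionality: the $\boldsymbol{v}_i^\mathrm{F}$ are orthonormal and confined to the $r$-dimensional space $\mathcal{R}(A)$ with $r = \rank A$, so at the latest at step $k = r$ the vector $A C^{(\ell_r)} \boldsymbol{v}_r^\mathrm{F} \in \mathcal{R}(A)$ lies in $\spn \lbrace \boldsymbol{v}_1^\mathrm{F}, \dots, \boldsymbol{v}_r^\mathrm{F} \rbrace$, forcing $h_{r+1,r}^\mathrm{F} = 0$ and hence a solution.

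The main obstacle I anticipate is the bookkeeping that chains the three external ingredients together consistently: the GP property (from semiconvergence) is what excludes the degenerate breakdown, \cite[Lemma 4]{Vuik1995} requires the running assumption $c_1, \dots, c_{k-1} \neq 0$ to be carried through the induction, and only once $H_k^\mathrm{F}$ is known to be nonsingular does the characterization of the happy breakdown in \cite[Proposition 2.2]{Saad1993} apply. Some care is also needed with the index on the Givens cosine (matching $|c_k|$ in the hypothesis to the step at which \cite[Lemma 4]{Vuik1995} is applied), so that the residual-norm condition is aligned with the correct outer iteration.
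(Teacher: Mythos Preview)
Your proposal is correct and follows essentially the same route as the paper: the paper gives no separate proof environment but assembles the argument in the paragraph preceding the theorem, chaining (i) semiconvergence $\Rightarrow$ $C^{(\ell_k)}$ nonsingular and $A C^{(\ell_k)}$ GP, (ii) the GP property to exclude the degenerate breakdown $A C^{(\ell_k)} \boldsymbol{v}_k^\mathrm{F} = \boldsymbol{0}$, (iii) \cite[Lemma~4]{Vuik1995} for the nonsingularity of $H_k^\mathrm{F}$, and (iv) \cite[Proposition~2.2]{Saad1993} for the happy breakdown. Your write-up is more explicit than the paper's in two places---the inductive confinement $\boldsymbol{v}_i^\mathrm{F} \in \mathcal{R}(A)$ and the finite-dimensionality termination argument---and you are right to flag the index mismatch between the hypothesis $|c_k|$ and the $|c_{k-1}|$ appearing in the quoted Vuik lemma as something to reconcile.
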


\section{Numerical experiments} \label{sec:nuex}
Numerical experiments on the discretized Stokes problem and artificially generated problems show the feasibility of GMRES and FGMRES preconditioned by multistep matrix splitting iterations and the effectiveness of the former.
These methods were compared with previous preconditioners in terms of the central processing unit (CPU) time.
For instance for multistep matrix splitting iteration preconditioning, we used the generalized shift-splitting (GSS) and Hermitian and skew-Hermitian splitting (HSS) and their inexact variants.
Although no condition such that GMRES preconditioned by a fixed number of matrix splitting iterations of an inexact splitting determines a solution without breakdown is given, we used the method for comparisons.

The initial iterates for the multistep matrix splitting iterations and GMRES and FGMRES iterations were set to zero.
No restarts were used for these methods.
The matrix splitting iterations in FGMRES approximately solved the linear system $A \boldsymbol{z} = \boldsymbol{v}_k$ to the accuracy on the residual norm $\| \boldsymbol{v}_k^\mathrm{F} - A \boldsymbol{z}_{k+1}^\mathrm{F} \| < |c_k|$ to ensure that FGMRES determines a solution without breakdown (Theorem \ref{th:FGMRES}).
The stopping criterion used for GMRES and FGMRES iterations was in terms of the relative residual norm $\| \boldsymbol{b} - A \boldsymbol{x}_k \| \leq 10^{-6} \| \boldsymbol{r}_0 \|$.

The computations were done on a computer with an Intel Xeon CPU E5-2670 2.50GHz, 256 GB random-access memory (RAM), and Community Enterprise Operating System (CentOS) Version 6.8.
All programs for the iterative methods were coded and run in Matlab R2014b for double precision floating point arithmetic with unit roundoff $2^{-53} \simeq 1.1 \cdot 10^{-16}$.

\subsection{Multistep generalized shifted splitting iteration preconditioning.} \label{sec:GSS}
We give numerical experiment results on singular saddle point problems 
\begin{align}
	A \boldsymbol{x} = 
	\begin{bmatrix}
		C & B^\mathsf{T} \\
		- B & \mathrm{O}
	\end{bmatrix}
	\boldsymbol{x}
	= \boldsymbol{b}, \quad
	B \in \mathbb{R}^{q \times p}, \quad
	C \in \mathbb{R}^{p \times p}~ \mbox{positive definite},
	\label{eq:saddle}
\end{align}
comparing GMRES preconditioned by $\ell$ iterations and FGMRES preconditioned by $\ell_k$ iterations of the generalized shifted splitting (GSS) 
\begin{align}
	\frac{1}{2}
	\begin{bmatrix}
		\alpha \mathrm{I} + C & B^\mathsf{T} \\
		- B & \beta \mathrm{I}
	\end{bmatrix}
	\boldsymbol{z}^{(i+1)}
	= 
	\frac{1}{2}
	\begin{bmatrix}
		\alpha \mathrm{I} & - B^\mathsf{T} \\
		B & \beta \mathrm{I}
	\end{bmatrix}
	\boldsymbol{z}^{(i)}
	+ \boldsymbol{d},
	\quad i = 1, 2, \dots, \ell ~ \mbox{or} ~ \ell_k
	\label{eq:GSS}
\end{align}
and its inexact variant (IGSS) with GMRES with the standard GSS and IGSS preconditioning $\ell = 1$ and a sparse direct solver, where $\ell$ is the number of GSS and IGSS iterations.

Consider test problems of the form \eqref{eq:saddle} given by the Stokes problem $- \mu \mathrm{\Delta} \boldsymbol{u} + \nabla p = \boldsymbol{f}$, $\nabla \cdot \boldsymbol{u} = 0$ in an open domain $\Omega$ in $\mathbb{R}^2$ with the boundary and normalization conditions  $\boldsymbol{u} = \boldsymbol{0}$ on $\partial \Omega$ and $\int_\Omega p(x) \mathrm{d} x = 0$, respectively, where $\mu$ is the kinematic viscosity constant, $\mathrm{\Delta}$ is the componentwise Laplace operator, the vector field $\boldsymbol{u}$ denotes the velocity, $\nabla$ and $\nabla \cdot$ denote the gradient and divergence operators, respectively, and the scalar function $p$ denotes the pressure.
The Stokes problem was discretized upwind in square domain $\Omega = (0, 1) \times (0, 1)$ on uniform grid.
Thus, the matrix representation of the Stokes problem is $C = (\mathrm{I}_q \otimes T + T \otimes \mathrm{I}_q) \oplus (\mathrm{I}_q \otimes T + T \otimes \mathrm{I}_q) \in \mathbb{R}^{2 q^2 \times 2 q^2}, B^\mathsf{T} = [\hat{B}^\mathsf{T}, \boldsymbol{b}_1, \boldsymbol{b}_2] \in \mathbb{R}^{2 q^2 \times (q^2 + 2)}, \hat{B} = [ (\mathrm{I}_q \otimes F)^\mathsf{T}, (F \otimes \mathrm{I}_q)^\mathsf{T} ] \in \mathbb{R}^{q^2 \times 2 q^2}, T = \mu h^{-2} \mathrm{tridiag} (-1, 2, -1) + (2 h)^{-1} \mathrm{tridiag} (-1, 1, 0) \in \mathbb{R}^{q \times q}, F = h^{-1} \mathrm{tridiag} (-1, 1, 0) \in \mathbb{R}^{q \times q}$, where $\otimes$ denotes the Kronecker product, $\boldsymbol{b}_1^\mathsf{T} = [\boldsymbol{e}^\mathsf{T}, \boldsymbol{0}^\mathsf{T}] \hat{B}$, $\boldsymbol{b}_2^\mathsf{T} = [\boldsymbol{0}^\mathsf{T}, \boldsymbol{e}^\mathsf{T}] \hat{B}$, $\boldsymbol{e} = [1, 1, \dots, 1]^\mathsf{T} \in \mathbb{R}^{q^2 / 2}$, and $h = (q+1)^{-1}$ is the discretization meshsize \cite{BaiGolubPan2004}.
The (2,1) and (1,2) blocks were modified to be rank-deficient as done in \cite[section 5]{ZhengBaiYang2009}, \cite[Example 4.1]{CaoMiao2016}.
We chose two viscosity values $\mu = 10^{-5}$ and $1$ and three kinds of grids $16 \times 16$, $24 \times 24$, and $32 \times 32$.
The right-hand side vector for \eqref{eq:Ax=b} was set to $\boldsymbol{b} = A \boldsymbol{e}$.

The GSS iteration matrix is semiconvergent for $\alpha$, $\beta > 0$ \cite[Theorem 3.2]{CaoMiao2016} and GMRES preconditioned by the multistep GSS iterations determines a solution of $A \boldsymbol{x} = \boldsymbol{b}$ without breakdown for \eqref{eq:saddle}, since $A$ is positive definite (Theorem \ref{th:iGMRES}).
On the other hand, the GMRES methods preconditioned by IGSS and its multistep version are not guaranteed to determine a solution without breakdown.
The value of $\beta$ for GSS and IGSS was set to $\| B \|^2 / \| C \|$ \cite{CaoLiYao2015}.
The value of $\alpha$ for GSS and IGSS was experimentally determined to have the minimal CPU time.
The resulting values were $\alpha = 10$, $13$, and $15$ for $\mu = 1$ with grids $16$, $24$, and $32$, respectively, and $\alpha = 30$, $37$, and $57$ for $\mu = 10^{-5}$ with grids $16$, $24$, and $32$, respectively.

The linear system \eqref{eq:GSS} was solved via \cite[Algorithm 2.1]{CaoLiYao2015} by using the LU factorization for GSS and was solved by using GMRES with the stopping criterion $10^{-1}$ in terms of the relative residual norm for the inexact multistep GSS (IGSS) iteration preconditioning \cite[Section 6]{BaiGolubNg2008}.

Tables \ref{table:Stokes1} and \ref{table:Stokes1e-5} give the number of iterations and the CPU time in seconds for the Stokes problem with $\mu = 1$ and $10^{-5}$, respectively.
Iter denotes the number of GMRES iterations and Time denotes the CPU time in seconds.
GMRES, GSS, IGSS, F-GSS, F-IGSS, and mldivide denote GMRES with no preconditioning, GMRES preconditioned by the multistep GSS iterations, its inexact variant, FGMRES preconditioned by the multistep GSS iterations, its inexact variant, and the Matlab direct solver function mldivide, respectively.
Hence, the CPU time for GMRES preconditioned by the multistep GSS iterations will improve with a sophisticated choice of the value of $\ell$.
The least CPU time for each number of grids among the iterative methods is denoted by bold texts.

The number of GSS and IGSS iterations was set to three throughout for simplicity, which is not necessarily optimal in terms of the CPU time.
For example, GMRES preconditioned by six GSS iterations took 76.67 seconds to attain the stopping criterion for the problem with $\mu = 10^{-5}$ and $q = 36$.

Table \ref{table:Stokes1} shows that for well-conditioned problems $\mu = 1$, IGSS ($\ell = 1$) took the least CPU time to attain the stopping criterion among the iterative methods except for the small problem with grids $16 \times 16$.
Table \ref{table:Stokes1e-5} shows that for ill-conditioned problems $\mu = 10^{-5}$, GSS ($\ell = 3$) took the least CPU time among the iterative methods.
For small problems with grids $16 \times 16$ and $24 \times 24$, FGMRES took larger CPU time than other iterative methods.
For ill-conditioned problems $\mu = 10^{-5}$, although FGMRES required the fewest numbers of iterations, it did not outperform other methods in terms of the CPU time.
The cost for solving the linear system $A \boldsymbol{z} = \boldsymbol{v}_k$ with the stopping criterion $\| \boldsymbol{v}_k^\mathrm{F} - A \boldsymbol{z}_{k+1}^\mathrm{F} \| < | c_k |$ (Theorem \ref{th:FGMRES}) was not marginal in FGMRES, since the value of $| c_k |$ becomes small in the final FGMRES iterations.
Note that the Matlab direct solver mldivide function gave more accurate solutions than the iterative methods within less CPU time.

\begin{table}[]
	\caption{Number of iterations and CPU time for \eqref{eq:saddle} with $\mu = 1$.}
	\centering	
	\footnotesize
	\begin{tabular}{l|*{2}{r}|*{2}{r}|*{2}{r}}
		\multicolumn{1}{l|}{Grids} & \multicolumn{2}{c|}{$16 \times 16$} & \multicolumn{2}{c|}{$24 \times 24$} & \multicolumn{2}{c}{$32 \times 32$} \\
		\hline
		 & \multicolumn{1}{l}{Iter} & \multicolumn{1}{l|}{Time} & \multicolumn{1}{l}{Iter} & \multicolumn{1}{l|}{Time} & \multicolumn{1}{l}{Iter} & \multicolumn{1}{l}{Time} \\
		\hline
		GMRES & 145 & 0.279 & 212 & 0.321 & 310 & 2.505 \\
		GSS ($\ell = 1$) & 19 & 0.071 & 20 & 0.315 & 23 & 3.311 \\
		GSS ($\ell = 3$) & 13 & \textbf{0.057} & 15 & 0.336 & 17 & 3.311 \\
		IGSS ($\ell = 1$) & 18 & 0.136 & 19 & \textbf{0.258} & 21 & \textbf{1.088} \\
		IGGS ($\ell = 3$) & 15 & 0.170 & 17 & 0.699 & 19 & 2.966 \\
		F-GSS & 29 & 0.062 & 37 & 0.362 & 40 & 3.238 \\
		F-IGSS & 29 & 0.395 & 38 & 0.739 & 39 & 4.132 \\
		\hline
		mldivide & & 0.011 & & 0.034 & & 0.066
	\end{tabular}
	\label{table:Stokes1}
	\vspace{11pt}	
	
	\caption{Number of iterations and CPU time for \eqref{eq:saddle} with $\mu = 10^{-5}$.}
	\centering	
	\footnotesize
	\begin{tabular}{l|*{2}{r}|*{2}{r}|*{2}{r}}
		\multicolumn{1}{l|}{Grids} & \multicolumn{2}{c|}{$16 \times 16$} & \multicolumn{2}{c|}{$24 \times 24$} & \multicolumn{2}{c}{$32 \times 32$} \\
		\hline
		 & \multicolumn{1}{l}{Iter} & \multicolumn{1}{l|}{Time} & \multicolumn{1}{l}{Iter} & \multicolumn{1}{l|}{Time} & \multicolumn{1}{l}{Iter} & \multicolumn{1}{l}{Time} \\
		\hline
		GMRES & 766 & 2.915 & 1,723 & 17.51 & 3,861 & 391.3 \\
		GSS ($\ell = 1$) & 740 & 2.693 & 1,585 & 16.32 & 3,036 & 275.2 \\
		GSS ($\ell = 3$) & 561 & \textbf{1.885} & 1,130 & \textbf{10.64} & 1,549 & \textbf{107.2} \\
		IGSS ($\ell = 1$) & 748 & 5.668 & 1,587 & 42.62 & 3,026 & 406.6 \\
		IGSS ($\ell = 3$) & 594 & 7.989 & 1,155 & 59.03 & 1,586 & 276.9 \\	
		F-GSS & 32 & 17.17 & 35 & 60.29 & 35 & 451.0 \\
		F-IGSS & 33 & 295.5 & 37 & 500.8 & 37 & 4113. \\
		\hline
		mldivide & & 0.011 & & 0.015 & & 0.029
	\end{tabular}
	\label{table:Stokes1e-5}
\end{table}

\subsection{Multistep Hermitian and skew-Hermitian splitting iteration preconditioning.} \label{sec:HSS}
We give numerical experiment results on singular and positive semidefinite linear system for \eqref{eq:Ax=b} with the generalized saddle point structure 
\begin{align}
	A \boldsymbol{x} = 
	\begin{bmatrix}
		C & B \\
		-B^\mathsf{T} & G
	\end{bmatrix} \boldsymbol{x} = \boldsymbol{b},
	\label{eq:generalizedsaddlepoint}
\end{align}
where $C \in \mathbb{R}^{p \times p}$ and $G \in \mathbb{R}^{q \times q}$ are symmetric and positive semidefinite, and $B \in \mathbb{R}^{p \times q}$.
The results compare GMRES preconditioned by $\ell$ iterations and FGMRES preconditioned by $\ell_k$ iterations of the Hermitian and skew-Hermitian splitting (HSS) \cite{BaiGolubNg2003}
\begin{align}
	\begin{cases}
		(\alpha \mathrm{I} + \mathcal{H}) \boldsymbol{z}^{(i+1/2)} = (\alpha \mathrm{I} - \mathcal{S}) \boldsymbol{z}^{(i)} + \boldsymbol{v}_k, \\
		(\alpha \mathrm{I} + \mathcal{S}) \boldsymbol{z}^{(i+1)} = (\alpha \mathrm{I} - \mathcal{H}) \boldsymbol{z}^{(i+1/2)} + \boldsymbol{v}_k,
	\end{cases}
	\quad i = 1, 2, \dots, \ell ~ \mbox{or} ~ \ell_k \label{eq:HSS}
\end{align}
and its inexact variant (IHSS) with GMRES with no preconditioning and the standard HSS and IHSS preconditioning $\ell = 1$ for \eqref{eq:generalizedsaddlepoint} and a sparse direct solver, where $\mathcal{H} = (A + A^\mathsf{T}) / 2$, $\mathcal{S} = (A - A^\mathsf{T}) / 2$, and $\alpha \in \mathbb{R}$.
The former and latter systems of \eqref{eq:HSS} were solved by using the Cholesky and LU factorizations, respectively, for HSS, and by using the conjugate gradient (CG) method \cite{HestenesStiefel1952} and the LSQR method \cite{PaigeSaunders1982a}, respectively, with the maximum number of iterations $n$, with the initial iterate equal to zero, and with the stopping criterion $10^{-1}$ in terms of the relative residual 2-norm for the IHSS iterations \cite{BaiGolubNg2008}.
The maximum number of IHSS iterations for FGMRES was $n$.

We generated test problems with the structure
\begin{align}
	(U \oplus V)^\mathsf{T} \! A (U \oplus V) = 
	\begin{bmatrix}
		\hat{C} \oplus \mathrm{O} & \hat{B} \oplus \mathrm{O} \\
		- \hat{B}^\mathsf{T} \oplus \mathrm{O} & \hat{G} \oplus \mathrm{O}
	\end{bmatrix}, \quad
	\boldsymbol{b} = A [1, 2, \dots, n]^\mathsf{T},
	\label{eq:structure}
\end{align}
where $U \in \mathbb{R}^{p \times p}$ and $V \in \mathbb{R}^{q \times q}$ are orthogonal matrices such that $U^\mathsf{T} \! C U = \hat{C} \oplus \mathrm{O}$, $V^\mathsf{T} \! G V = \hat{G} \oplus \mathrm{O}$, and $U^\mathsf{T} B V = \hat{B} \oplus \mathrm{O}$.
We set $U^\mathsf{T} \! C U = \diag (\varphi(1), \varphi(2), \dots, \varphi(p-q-1)) \oplus \mathrm{O} \in \mathbb{R}^{p \times p}$ for $\varphi(i) = \kappa^{i / (p - q - 1)}$, $j \in \mathbb{N}$, $V^\mathsf{T} \! G V = \diag (\psi(1), \psi(2), \dots, \psi(q-2)) \oplus \mathrm{O} \in \mathbb{R}^{q \times q}$ for $\psi(i) = \kappa^{i / (q - 1)}$, and $V^\mathsf{T} \! B^\mathsf{T} U = [ V^\mathsf{T} \! G^\mathsf{T} V, \mathrm{O} ] \in \mathbb{R}^{p \times q}$, where $\kappa = 10^{-j}$.
We set $q = 16$, $32$, and $64$, $p = q^2$, nonzero density $0.1$\% of $A$, and $j = 3$, $6$, and $9$ to show the effect of the condition number on the convergence.
The value of $j$ determines the condition number of $A$ such as $\| A \| \| A^\dag \| = \sqrt{2} \times 10^j$, where $A^\dag$ is the pseudo inverse of $A$.
The orthogonal matrices $U \in \mathbb{R}^{p \times p}$ and $V \in \mathbb{R}^{q \times q}$ were the products of random Givens rotations.
Hence, the HSS iteration matrix $(\alpha \mathrm{I} + \mathcal{S})^{-1} (\alpha \mathrm{I} - \mathcal{H}) (\alpha \mathrm{I} + \mathcal{H})^{-1} (\alpha \mathrm{I} - \mathcal{S})$ is semiconvergent \cite[Theorem 3.6]{Bai2010}, and GMRES preconditioned by multistep HSS iterations determines a solution of \eqref{eq:generalizedsaddlepoint} without breakdown (Theorem \ref{th:iGMRES}).
On the other hand, the GMRES methods preconditioned by multistep IHSS and its iterations are not guaranteed to determine a solution without breakdown.

Multistep HSS and IHSS iterations involve two parameters: the iteration parameter $\alpha$ and the number of HSS iterations $\ell$.
Several techniques were proposed for estimating an optimal value of the HSS iteration parameter for the nonsingular case.
As pointed out by a referee, parameter estimation techniques proposed in \cite{BaiGolubLi2006}, \cite{Bai2009}, \cite{Huang2015}, \cite{Chen2015} are developed for the present case.
Huang's technique need not modify for the singular case.
In Chen's technique, the minimum eigenvalue of $\mathcal{H}$ and the minimum singular value of $\mathcal{S}$ were replaced by the nonzero ones.
After the value of the iteration parameter $\alpha$ was determined, the number of iterations $\ell$ was determined by applying the HSS iterations alone to \eqref{eq:generalizedsaddlepoint} and adopted the smallest between 10 and the smallest number of $i$ which satisfies the relative difference norm $\| \boldsymbol{z}^{(i-1)} - \boldsymbol{z}^{(i)} \| < 10^{-1} \| \boldsymbol{z}^{(i)} \|$.
The CPU times required by Huang's technique to determine the values of the HSS iteration parameter were 0.001 seconds for $q = 16$, 0.002 seconds for $q = 32$, and 0.019 seconds for $q = 64$.
Bai et al.'s technique \cite{BaiGolubLi2006} and Bai's technique \cite{Bai2009} did not give more reasonable values of the HSS iteration parameter than Huang's \cite{Huang2015} and Chen's \cite{Chen2015} techniques for the test problems.
Note that Bai's technique \cite{Bai2009} is for the saddle-point problem instead of the generalized saddle-point problem \eqref{eq:generalizedsaddlepoint}, and does not take into account the $(2, 2)$ block of \eqref{eq:generalizedsaddlepoint} for the estimation.

Tables \ref{tbl:sprad_optpar3}--\ref{tbl:sprad_optpar9} give the optimal and estimated values of the HSS iteration parameter and the value of the corresponding pseudo spectral radius of the HSS iteration matrix.
The optimal value of the HSS iteration parameter $\alpha_\mathrm{exp}$ was experimentally determined to minimized the pseudo-spectral radius of the HSS iteration matrix.
The values of the HSS iteration parameters which were estimated by using Huang's and Chen's techniques are denoted with subscript $\mathrm{C}$ and $\mathrm{H}$, respectively.
Chen's technique estimated the values of the parameter close to the optimal one of the parameter which were experimentally determined.

Tables \ref{table:3}--\ref{table:9} give the number of the iterations and the CPU time in seconds for the test problems with different sizes and condition numbers.
HSS, IHSS, HSS$^\prime$, IHSS$^\prime$, F-HSS$^\prime$, and F-IHSS$^\prime$ denote GMRES preconditioned by the HSS preconditioner, its inexact variant, GMRES preconditioned by the multistep HSS iterations, its inexact variants, FGMRES preconditioned by multistep HSS iterations, and its inexact variant, respectively.
$\dag$ means that CG or LSQR for the linear systems did not attain the stopping criterion within $n$ iterations or the IHSS iterations did not satisfy $\| \boldsymbol{v}_k^\mathrm{F} - A \boldsymbol{z}_k^\mathrm{F} \| < | c_k |$ within $n$ iterations for the indicated number of iterations.
$\ddag$ means that the Matlab direct solver mldivide function fails to give a solution, i.e., some of its entries are Not a Number (NaN).

HSS$^\prime$ took the least CPU time to attain the stopping criterion among the iterative methods except for the case $(j, q) = (9, 16)$.
Bai and Chen's techniques tended to give reasonable values of the HSS iteration parameter for well-conditioned or small problems such as the cases $(j, q) = (3, 16)$, $(3, 64)$, $(6, 16)$, $(6, 32)$, whereas Huang's technique tended to give reasonable values of the HSS iteration parameter for ill-conditioned or large problems such as the cases $(j, q) = (6, 64)$, 
\begin{table}[t]
	\caption{HSS parameter $\alpha$ and pseudo spectral radius $\nu (H(\alpha))$ of the iteration matrix of \eqref{eq:HSS} for \eqref{eq:generalizedsaddlepoint}, $j = 3$.}
	\centering	
	\footnotesize
	\begin{tabular}{c|*{3}{r}}
		$q$ & \multicolumn{1}{c}{$16$} & \multicolumn{1}{c}{$32$} & \multicolumn{1}{c}{$64$} \\
		\hline
		$\alpha_\mathrm{exp}$ & \texttt{0.03162} & \texttt{0.03162} & \texttt{0.03162} \\
		$\nu (H(\alpha_\mathrm{exp}))$ & \texttt{0.93869} & \texttt{0.93869} & \texttt{0.93869} \\
		$\alpha_\mathrm{H}$ & \texttt{0.15678} & \texttt{0.08055} & \texttt{0.03295} \\
		$\nu (H(\alpha_\mathrm{H}))$ & \texttt{0.98732} & \texttt{0.97548} & \texttt{0.94109} \\
		$\alpha_\mathrm{C}$ & \texttt{0.03162} & \texttt{0.03162} & \texttt{0.03162} \\
		$\nu (H(\alpha_\mathrm{C}))$ & \texttt{0.93869} & \texttt{0.93869} & \texttt{0.93869} 
	\end{tabular}
	\label{tbl:sprad_optpar3}
	\vspace{8pt}
	\caption{HSS parameter $\alpha$ and pseudo spectral radius $\nu (H(\alpha))$ of the iteration matrix of\eqref{eq:HSS} for \eqref{eq:generalizedsaddlepoint}, $j = 6$.}
	\centering	
	\footnotesize
	\begin{tabular}{c|*{3}{r}}
		$q$ & \multicolumn{1}{c}{$16$} & \multicolumn{1}{c}{$32$} & \multicolumn{1}{c}{$64$} \\
		\hline
		$\alpha_\mathrm{exp}$ & \texttt{0.00100} & \texttt{0.00100} & \texttt{0.00100} \\
		$\nu (H(\alpha_\mathrm{exp}))$ & \texttt{0.99800} & \texttt{0.99800} & \texttt{0.99800} \\
		$\alpha_\mathrm{H}$ & \texttt{0.14289} & \texttt{0.08068} & \texttt{0.03610} \\
		$\nu (H(\alpha_\mathrm{H}))$ & \texttt{0.99999} & \texttt{0.99998} & \texttt{0.99994} \\
		$\alpha_\mathrm{C}$ & \texttt{0.00100} & \texttt{0.00100} & \texttt{0.00100} \\
		$\nu (H(\alpha_\mathrm{C}))$ & \texttt{0.99800} & \texttt{0.99800} & \texttt{0.99800} \\
	\end{tabular}
	\label{tbl:sprad_optpar6}
	\vspace{8pt}
	\caption{HSS parameter $\alpha$ and pseudo spectral radius $\nu (H(\alpha))$ of \eqref{eq:HSS} the iteration matrix for \eqref{eq:generalizedsaddlepoint}, $j = 9$.}
	\centering	
	\footnotesize
	\begin{tabular}{c|*{3}{r}}
		$q$ & \multicolumn{1}{c}{$16$} & \multicolumn{1}{c}{$32$} & \multicolumn{1}{c}{$64$} \\
		\hline
		$\alpha_\mathrm{exp}$ & \texttt{3.16e-5} & \texttt{3.16e-5} & \texttt{3.16e-5} \\
		$\nu (H(\alpha_\mathrm{exp}))$ & \texttt{0.99993} & \texttt{0.99993} & \texttt{0.99993} \\
		$\alpha_\mathrm{H}$ & \texttt{0.14102} & \texttt{0.13766} & \texttt{0.03878} \\
		$\nu (H(\alpha_\mathrm{H}))$ & \texttt{1.00000} & \texttt{1.00000} & \texttt{1.00000} \\
		$\alpha_\mathrm{C}$ & \texttt{3.16e-5} & \texttt{3.16e-5} & \texttt{3.16e-5} \\
		$\nu (H(\alpha_\mathrm{C}))$ & \texttt{0.99993} & \texttt{0.99993} & \texttt{0.99993} \\
	\end{tabular}
	\label{tbl:sprad_optpar9}
\end{table}
\begin{table}[]
	\caption{Number of iterations and CPU time for \eqref{eq:generalizedsaddlepoint}, \eqref{eq:structure} with $j = 3$.}
	\centering	
	\footnotesize
	\begin{tabular}{cl|*{3}{r}|*{3}{r}|*{3}{r}}
		& \multicolumn{1}{c|}{$q$} & \multicolumn{3}{c|}{$16$} & \multicolumn{3}{c|}{$32$} & \multicolumn{3}{c}{$64$} \\
		\cline{2-11}	
		& & \multicolumn{1}{c}{$\ell$} & \multicolumn{1}{l}{Iter} & \multicolumn{1}{l|}{Time} & \multicolumn{1}{c}{$\ell$} & \multicolumn{1}{l}{Iter} & \multicolumn{1}{l|}{Time} & \multicolumn{1}{c}{$\ell$} & \multicolumn{1}{l}{Iter} & \multicolumn{1}{l}{Time}  \\
		\hline
        & GMRES   &   & 112 &          0.061 &    & 159 &          0.154 &    & 167 &          0.919 \\
		\hline
		\multirow{6}{17pt}{$\alpha_\mathrm{exp}$\\$\alpha_\mathrm{C}$} & HSS     & &   47 &          0.018 &  & 68 &          0.049 & & 81 &          0.493 \\
		& HSS$^\prime$     & 7 &  19 & \textbf{0.010} & 10 &  15 &          0.039 & 10 &  15 & \textbf{0.405} \\
		& IHSS             &   &  52 &          0.089 &    &  56 &          0.137 &    &  70 &          0.834 \\
		& IHSS$^\prime$    & 7 &  29 &         $\dag$ & 10 &   1 &         $\dag$ & 10 &   1 &         $\dag$ \\
		& F-HSS$^\prime$   &   &  19 &          0.014 &    &   1 &         $\dag$ &    &   1 &         $\dag$ \\
		& F-IHSS$^\prime$  &   &   8 &          0.267 &    &  11 &          0.349 &    &  11 &          1.442 \\
		\hline
		\multirow{6}{17pt}{$\alpha_\mathrm{H}$} & HSS & & 63 &  0.026 & & 62 &  0.043 & & 79 &  0.453 \\
		& HSS$^\prime$    & 5 &   22 &         0.011 & 8 &  15 &  \textbf{0.033} & 10 &  15 & \textbf{0.405} \\
		& IHSS            &   &   65 &         0.083 &   &  63 &           0.129 &    &  64 &          0.808 \\
		& IHSS$^\prime$   & 5 &    1 &        $\dag$ & 8 &   1 &          $\dag$ & 10 &   1 &         $\dag$ \\
		& F-HSS$^\prime$  &   &   35 &         0.017 &   &   1 &          $\dag$ &    &   1 &         $\dag$ \\
		& F-IHSS$^\prime$ &   &   14 &         0.337 &   &  15 &           0.425 &    &  13 &          1.327 \\
		\hline
		& mldivide & & & 0.000 & & & 0.001 & & & \ddag 0.011		
	\end{tabular}
	\label{table:3}
	\vspace{11pt}	
	
	\caption{Number of iterations and CPU time for \eqref{eq:generalizedsaddlepoint}, \eqref{eq:structure} with $j = 6$.}
	\centering	
	\footnotesize
	\begin{tabular}{cl|*{3}{r}|*{3}{r}|*{3}{r}}
		& \multicolumn{1}{c|}{$q$} & \multicolumn{3}{c|}{$16$} & \multicolumn{3}{c|}{$32$} & \multicolumn{3}{c}{$64$} \\
		\cline{2-11}		
		& & \multicolumn{1}{c}{$\ell$} & \multicolumn{1}{l}{Iter} & \multicolumn{1}{l|}{Time} & \multicolumn{1}{c}{$\ell$} & \multicolumn{1}{l}{Iter} & \multicolumn{1}{l|}{Time} & \multicolumn{1}{c}{$\ell$} & \multicolumn{1}{l}{Iter} & \multicolumn{1}{l}{Time}  \\
		\hline
		& GMRES  &    & 163 &          0.117 &    & 464 &          1.136 &    & 1,014 &          31.50 \\
		\hline
		\multirow{6}{17pt}{$\alpha_\mathrm{exp}$\\$\alpha_\mathrm{C}$} & HSS & & 126 & 0.073 & & 209 & 0.262 & & 373 & 4.634 \\
		& HSS$^\prime$   & 10 &  49 & \textbf{0.046} & 10 &  71 & \textbf{0.164} & 10 &  109 &          2.139 \\
		& IHSS           &    & 130 &          0.517 &    & 262 &          1.575 &    &  738 &          36.84 \\
		& IHSS$^\prime$  & 10 &   1 &         $\dag$ & 10 &   1 &         $\dag$ & 10 &    1 &         $\dag$ \\
		& F-HSS$^\prime$ &    &   1 &         $\dag$ &    &   1 &         $\dag$ &    &    1 &         $\dag$ \\
		& F-IHSS$^\prime$ &    &   4 &         $\dag$ &    &   2 &         $\dag$ &    &    1 &         $\dag$ \\
		\hline
		\multirow{6}{17pt}{$\alpha_\mathrm{H}$} & HSS & & 162 & 0.116 & & 436 & 0.999 & & 326 &  3.397 \\
		& HSS$^\prime$    &  5 & 146 &          0.136 &  7 & 150 &          0.309 & 10 &  76 & \textbf{1.520} \\
		& IHSS            &    & 163 &          0.214 &    & 444 &          1.444 &    & 362 &          6.001 \\
		& IHSS$^\prime$   &  5 &   1 &         $\dag$ &  7 &   1 &         $\dag$ & 10 &   1 &         $\dag$ \\
		& F-HSS$^\prime$  &    &  32 &         $\dag$ &    &   1 &         $\dag$ &    &   1 &         $\dag$ \\
		& F-IHSS$^\prime$ &    &   6 &         $\dag$ &    &  11 &         $\dag$ &    &  12 &         $\dag$ \\
		\hline
		& mldivide & & & 0.000 & & & \ddag 0.001 & & & \ddag 0.011		
	\end{tabular}
	\label{table:6}
	\vspace{11pt}
	
	\caption{Number of iterations and CPU time for \eqref{eq:generalizedsaddlepoint}, \eqref{eq:structure} with $j = 9$.}
	\centering	
	\footnotesize
	\begin{tabular}{cl|*{3}{r}|*{3}{r}|*{3}{r}}
		& \multicolumn{1}{c|}{$q$} & \multicolumn{3}{c|}{$16$} & \multicolumn{3}{c|}{$32$} & \multicolumn{3}{c}{$64$} \\
		\cline{2-11}
		& & \multicolumn{1}{c}{$\ell$} & \multicolumn{1}{l}{Iter} & \multicolumn{1}{l|}{Time} & \multicolumn{1}{c}{$\ell$} & \multicolumn{1}{l}{Iter} & \multicolumn{1}{l|}{Time} & \multicolumn{1}{c}{$\ell$} & \multicolumn{1}{l}{Iter} & \multicolumn{1}{l}{Time} \\
		\hline
		& GMRES   &    &  117 &          0.062 & & 353 & 0.672 & & 852 &          23.80 \\
		\hline
		\multirow{6}{17pt}{$\alpha_\mathrm{exp}$\\$\alpha_\mathrm{C}$} & HSS & & 169 & 0.125 & & 511 & 1.333 & & 1,308 & 49.83 \\ 
		& HSS$^\prime$    & 10 &  162 &          0.219 & 10 & 317 &          1.063 & 10 & 487 & 13.61 \\
		& IHSS            &    &  107 &          0.717 &    & 297 &          3.455 &    & 938 &          76.97 \\  
		& IHSS$^\prime$   & 10 &    1 &         $\dag$ & 10 &   1 &         $\dag$ & 10 &   1 &         $\dag$ \\  
		& F-HSS$^\prime$  &    &    1 &         $\dag$ &    &   1 &         $\dag$ &    &   1 &         $\dag$ \\   
		& F-IHSS$^\prime$ &    &    1 &         $\dag$ &    &   1 &         $\dag$ &    &   1 &         $\dag$ \\
		\hline
		\multirow{6}{17pt}{$\alpha_\mathrm{H}$} & HSS & & 117 & \textbf{0.064} & & 348 & 0.659 & & 346 & 4.169 \\ 
		& HSS$^\prime$    & 5 & 111 &  0.089 & 6 & 190 &  \textbf{0.389} & 10 &  80 & \textbf{1.503} \\
		& IHSS            &   & 117 &  0.135 &   & 350 &           0.895 &    & 381 &          6.628 \\
		& IHSS$^\prime$   & 5 &   1 & $\dag$ & 6 &   1 &          $\dag$ & 10 &   1 &         $\dag$ \\
		& F-HSS$^\prime$  &   &  30 & $\dag$ &   &   1 &          $\dag$ &    &   1 &         $\dag$ \\
		& F-IHSS$^\prime$ &   &   9 & $\dag$ &   &   8 &          $\dag$ &    &  12 &         $\dag$ \\
		\hline
		& mldivide & & & 0.000 & & & 0.001 & & & \ddag 0.011
	\end{tabular}
	\label{table:9}
\end{table}
$(9, 16)$, $(9, 32)$, $(9, 64)$.
Although F-IHSS$^\prime$ took the fewest numbers of iterations, it did not outperform other methods in terms of the CPU time.
IHSS$^\prime$ did not converge for all test problems.
The Matlab direct solver mldivide function failed to give a solution for the large cases $q = 64$, although it outperformed the iterative methods for the other cases, except for the case $j = 6$.

Comparing Tables \ref{table:3}--\ref{table:9} with Tables \ref{tbl:sprad_optpar3}--\ref{tbl:sprad_optpar9}, we see that these estimated optimal values of the HSS iteration parameter in terms of the pseudo spectral radius did not give optimal CPU time for HSS$^\prime$.
This implies that a small pseudo-spectral radius does not necessarily gives a fast convergence of HSS, HSS$^\prime$, and IHSS (see also Theorem \ref{th:bound}).

\section{Conclusions} \label{sec:conc}
We considered applying several steps of matrix splitting iterations as a preconditioner to GMRES and FGMRES for solving singular linear systems.
We gave sufficient conditions such that GMRES and FGMRES preconditioned by multistep matrix splitting iterations determine a solution without breakdown, and a convergence bound of GMRES preconditioned by multistep matrix splitting iterations based on a spectral analysis.
We presented a complexity issue of using multistep matrix splitting iteration preconditioning more than one step for GMRES.
Numerical experiments showed that GMRES preconditioned by the multistep GSS and HSS iterations is efficient compared to previous methods including FGMRES for large and ill-conditioned problems.

\section*{Appendix}

If $\ind (A) = \min \lbrace d \in \mathbb{N}_0| \rank A^d = \rank A^{d+1} \rbrace$, where $A^0= \mathrm{I}$ and $\mathrm{I}$ is the identity matrix \cite[Definition 7.2.1]{CampbellMeyer2009}, then $d \geq \ind (A)$ is equivalent to $\mathcal{R}(A^d) \cap \mathcal{N}(A^d) = \lbrace \boldsymbol{0} \rbrace$ \cite[Lemma 7.6.1]{CampbellMeyer2009}.
The following theorem gives conditions such that GMRES determines a solution without breakdown for \break $\ind (A) \geq 1$.

\begin{theorem} \label{th:GMRESanyind}
GMRES determines a solution of $A \boldsymbol{x} = \boldsymbol{b}$ without breakdown for all $\boldsymbol{b} \in \mathcal{R}(A^d)$ and for all $ \boldsymbol{x}_0 \in \mathcal{R}(A^{d - 1}) + \mathcal{N}(A)$ if and only if $d \geq \ind (A)$.
\end{theorem}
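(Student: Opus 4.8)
The plan is to prove both implications by exploiting the core--nilpotent structure of $A$, namely the decomposition $\mathbb{R}^n = \mathcal{R}(A^d) \oplus \mathcal{N}(A^d)$, which holds precisely when $d \geq \ind(A)$ (this is the equivalence with $\mathcal{R}(A^d) \cap \mathcal{N}(A^d) = \lbrace \boldsymbol{0} \rbrace$ recalled above). Both summands are $A$-invariant, the restriction of $A$ to $\mathcal{R}(A^d)$ is nonsingular (since $A \mathcal{R}(A^d) = \mathcal{R}(A^{d+1}) = \mathcal{R}(A^d)$ for $d \geq \ind(A)$), and the restriction to $\mathcal{N}(A^d)$ is nilpotent. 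The whole argument rests on tracking which invariant subspace the initial residual $\boldsymbol{r}_0$ lives in.

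For the \emph{if} part, I would first show that the hypotheses force $\boldsymbol{r}_0 \in \mathcal{R}(A^d)$: since $\boldsymbol{x}_0 \in \mathcal{R}(A^{d-1}) + \mathcal{N}(A)$ gives $A \boldsymbol{x}_0 \in A \mathcal{R}(A^{d-1}) + A \mathcal{N}(A) = \mathcal{R}(A^d)$, and $\boldsymbol{b} \in \mathcal{R}(A^d)$ by assumption, we get $\boldsymbol{r}_0 = \boldsymbol{b} - A \boldsymbol{x}_0 \in \mathcal{R}(A^d)$. Because $\mathcal{R}(A^d)$ is $A$-invariant, every Krylov subspace $\mathcal{K}_k(A, \boldsymbol{r}_0)$ stays inside it, so GMRES on $\mathbb{R}^n$ produces exactly the same iterates as GMRES driven by the restriction $A|_{\mathcal{R}(A^d)}$, which is nonsingular. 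A nonsingular matrix is trivially a GP matrix, so \cite[Theorem 2.2]{MorikuniHayami2015} applies to this restriction and guarantees that GMRES determines a solution $\boldsymbol{y} \in \mathcal{R}(A^d)$ of $A \boldsymbol{y} = \boldsymbol{r}_0$ without breakdown; then $\boldsymbol{x}_0 + \boldsymbol{y}$ solves $A \boldsymbol{x} = \boldsymbol{b}$.

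For the \emph{only if} part I would argue by contraposition: assume $d < \ind(A)$ and exhibit data causing breakdown. Since $\ind(A)$ is the size of the largest Jordan block of $A$ for the eigenvalue $0$, there is a Jordan chain $\boldsymbol{u}_1, \boldsymbol{u}_2, \dots, \boldsymbol{u}_m$ with $m \geq d+1$, $A \boldsymbol{u}_1 = \boldsymbol{0}$, and $A \boldsymbol{u}_j = \boldsymbol{u}_{j-1}$ for $j \geq 2$. Choosing $\boldsymbol{x}_0 = \boldsymbol{0} \in \mathcal{R}(A^{d-1}) + \mathcal{N}(A)$ and $\boldsymbol{b} = \boldsymbol{u}_{m-d} = A^d \boldsymbol{u}_m \in \mathcal{R}(A^d)$ gives $\boldsymbol{r}_0 = \boldsymbol{u}_{m-d}$ and the Krylov sequence $\boldsymbol{u}_{m-d}, \boldsymbol{u}_{m-d-1}, \dots, \boldsymbol{u}_1, \boldsymbol{0}$. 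I would then check that the process runs cleanly for $m-d-1$ steps but at step $k = m-d$ one has $\dim A \mathcal{K}_{m-d} = m-d-1 < m-d = \dim \mathcal{K}_{m-d}$, a breakdown of the first type, while the residual stays nonzero because $\boldsymbol{b} = \boldsymbol{u}_{m-d} \notin A \mathcal{K}_{m-d} = \spn \lbrace \boldsymbol{u}_1, \dots, \boldsymbol{u}_{m-d-1} \rbrace$, so the true solution $\boldsymbol{u}_{m-d+1}$ lies outside $\boldsymbol{x}_0 + \mathcal{K}_{m-d}$ and GMRES breaks down without determining a solution.

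The part needing the most care is the \emph{only if} direction: I must verify that the Arnoldi process does not terminate earlier than step $m-d$ (no happy breakdown accidentally delivering a solution) and that the minimal residual over $\mathcal{K}_{m-d}$ is genuinely positive even when the Jordan basis is not orthonormal; the latter follows from the linear independence of $\boldsymbol{u}_1, \dots, \boldsymbol{u}_{m-d}$, but it should be stated explicitly rather than assumed. The \emph{if} direction is comparatively routine once the containment $\boldsymbol{r}_0 \in \mathcal{R}(A^d)$ and the invariance of $\mathcal{R}(A^d)$ are established.
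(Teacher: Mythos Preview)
Your proposal is correct. The \emph{if} direction is essentially the paper's argument in different clothing: both use $\boldsymbol{r}_0 \in \mathcal{R}(A^d)$ and the $A$-invariance of $\mathcal{R}(A^d)$; the paper then verifies directly that $\dim A\mathcal{K}_k = \dim \mathcal{K}_k$ via the contradiction $\mathcal{K}_k \cap \mathcal{N}(A) \subseteq \mathcal{R}(A^d) \cap \mathcal{N}(A^d) = \lbrace \boldsymbol{0} \rbrace$, whereas you phrase the same thing as ``restrict to $\mathcal{R}(A^d)$ where $A$ is nonsingular''. The \emph{only if} direction, however, is where the two diverge. The paper's construction is markedly simpler than yours: it picks any nonzero $\boldsymbol{t} \in \mathcal{R}(A^d) \cap \mathcal{N}(A)$ (which exists precisely because $d < \ind(A)$; in your Jordan-chain notation this is just $\boldsymbol{t} = \boldsymbol{u}_1 = A^{m-1}\boldsymbol{u}_m$), sets $\boldsymbol{r}_0 = \boldsymbol{t}$, and obtains $A\boldsymbol{r}_0 = \boldsymbol{0}$, i.e., breakdown at step~$1$ with $\dim A\mathcal{K}_1 = 0 < 1 = \dim \mathcal{K}_1$ and $\boldsymbol{r}_1 = \boldsymbol{r}_0 \neq \boldsymbol{0}$. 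Your choice $\boldsymbol{b} = \boldsymbol{u}_{m-d}$ pushes the breakdown out to step $m-d$ and forces you to track the full chain, which is exactly the ``part needing the most care'' you flag; all of that care evaporates if you go to the bottom of the chain instead. What your longer construction buys is nothing extra for the theorem, though it does illustrate that breakdown can occur at an arbitrary step rather than only at step~$1$.
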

\begin{proof}
Assume $d \geq \ind (A)$, or $\mathcal{R}(A^d) \cap \mathcal{N}(A^d) = \lbrace \boldsymbol{0} \rbrace$.
Let $\boldsymbol{b} \in \mathcal{R}(A^d)$ and $\boldsymbol{x}_0 \in \mathcal{R}(A^{d - 1}) + \mathcal{N}(A)$.
Then, $\boldsymbol{r}_0 \in \mathcal{R}(A^d)$ and $\mathcal{K}_k \subseteq \mathcal{R}(A^d)$.
If $k$ is the smallest positive integer such that $\dim \! \mathcal{K}_k < k$, then GMRES determines a solution of $A \boldsymbol{x} = \boldsymbol{b}$ at step $k-1$ (see \cite[Theorem 2.2]{BrownWalker1997}).
Now, assume $\dim \! \mathcal{K}_i = i$, $1 \leq i \leq k$.
Since $\mathcal{K}_{i+1} = \boldsymbol{r}_0 \cup A \mathcal{K}_i$, we have $\dim A \mathcal{K}_i = \dim \mathcal{K}_i = i$ for $i = 1, 2, \dots, k-1$.
Let the columns of $V \in \mathbb{R}^{n \times k}$ form a basis of $\mathcal{K}_k$.
If $\dim A \mathcal{K}_k < \dim \mathcal{K}_k$, then there exists $\boldsymbol{c} \not = \boldsymbol{0}$ such that $A V \boldsymbol{c} = \boldsymbol{0}$.
Since $V \boldsymbol{c} \not = \boldsymbol{0}$ for $\boldsymbol{c} \not = \boldsymbol{0}$, we have $\mathcal{K}_k \cap \mathcal{N}(A) \not = \lbrace \boldsymbol{0} \rbrace$.
From $\mathcal{K}_k \subseteq \mathcal{R}(A^d)$ and $\mathcal{N}(A) \subset \mathcal{N}(A^d)$, we have $\mathcal{R}(A^d) \cap \mathcal{N}(A^d) \not = \lbrace \boldsymbol{0} \rbrace$, which contradicts with $d \geq \ind (A)$.
Hence, $\dim A \mathcal{K}_k = \dim \mathcal{K}_k$ for all $k \in \mathbb{N}$.
Since GMRES does not break down through rank deficiency of the least squares problem $\min_{\boldsymbol{z} \in \mathcal{K}_k} \| \boldsymbol{r}_0 - A \boldsymbol{z} \|$, the sufficiency is shown from \cite[Theorem 2.2]{BrownWalker1997}. 

On the other hand, assume $d < \ind (A)$.
Then, $\mathcal{N}(A^d) \subset \mathcal{N}(A^{d + 1})$.
There exits $\boldsymbol{s} \not = \boldsymbol{0}$ such that $\boldsymbol{s} \not \in \mathcal{N}(A^d)$ and $\boldsymbol{s} \in \mathcal{N}(A^{d + 1})$.
Let $\boldsymbol{t} = A^d \boldsymbol{s}$.
Then, $\boldsymbol{t} \not = \boldsymbol{0}$ and $A \boldsymbol{t} = A^{d + 1} \boldsymbol{s} = \boldsymbol{0}$.
Hence, there exists $\boldsymbol{t} \not = \boldsymbol{0}$ such that $\boldsymbol{t} \in \mathcal{R}(A^d) \cap \mathcal{N}(A)$.
Let $\boldsymbol{b} = \boldsymbol{t} + A \boldsymbol{x}_0$ for $\boldsymbol{x}_0 \in \mathcal{R}(A^{d - 1}) + \mathcal{N}(A)$.
Then, $\boldsymbol{b} \in \mathcal{R}(A^d)$ and $\boldsymbol{r}_0 = \boldsymbol{b} - A \boldsymbol{x}_0 = \boldsymbol{t} \not = \boldsymbol{0}$.
Since $A \boldsymbol{r}_0 = \boldsymbol{0}$, we have $\boldsymbol{r}_1 = \boldsymbol{b} - A \boldsymbol{x}_1 = \boldsymbol{b} - A (\boldsymbol{x}_0 + c \boldsymbol{r}_0) = \boldsymbol{r}_0 - c A \boldsymbol{r}_0 = \boldsymbol{r}_0 \not = \boldsymbol{0}$ for $c \in \mathbb{R}$ and $\dim A \mathcal{K}_1 = 0 < \dim \mathcal{K}_1 = 1$.
Hence, GMRES breaks down at step 1 before determining a solution of $A \boldsymbol{x} = \boldsymbol{b}$.
Therefore, we complete the proof.
\qed
\end{proof}
This theorem agrees with \cite[Theorem 2.2]{MorikuniHayami2015} for $d = 1$.
Although similar results to Theorem \ref{th:GMRESanyind} were given in \cite{WeiWu2000}, no attention was paid there to the uniqueness of the GMRES iterate $\boldsymbol{x}_k$, i.e., the dimensions of $A \mathcal{K}_k$ and $\mathcal{K}_k$.


\section*{acknowledgements}
The author would like to thank Doctor Miroslav Rozlo\v{z}n\'{i}k, Doctor Akira Imakura, and the referees for their valuable comments.


\bibliographystyle{siam}
\bibliography{ref}
\end{document}